\documentclass[reqno]{amsart}

\usepackage{amsmath,amssymb,amsthm}
\setlength\parindent{0pt}

\usepackage{graphicx,tikz}
\newtheorem{theorem}{Theorem}
\newtheorem*{thm}{Theorem}
\newtheorem{proposition}{Proposition}

\newtheorem{lemma}{Lemma}

\theoremstyle{definition}

\theoremstyle{remark}

\begin{document}

\title[]{Potential Theory and the boundary\\ of combinatorial graphs}
 \thanks{}

\author[]{Stefan Steinerberger}
\address{Department of Mathematics, University of Washington, Seattle, WA 98195, USA}
\email{steinerb@uw.edu}

\begin{abstract} Let $G=(V,E)$ be a finite, connected graph. We investigate a notion of boundary $\partial G \subseteq V$ and argue that it is well behaved from the point of view of potential theory.  This is done by proving a number of discrete analogous of classical results for compact domains $\Omega \subset \mathbb{R}^d$. These include (1) an analogue of P\'olya's result that a random walk in $\Omega$ typically hits the boundary $\partial \Omega$ within $\lesssim \mbox{diam}(\Omega)^2$ units of time, (2) an analogue of the Faber-Krahn inequality, (3) an analogue of the Hardy inequality, (4) an analogue of the Alexandrov-Bakelman-Pucci estimate, (5) a stability estimate for hot spots and (6) a Theorem of Bj\"orck stating that probability measures $\mu$ that maximize $\int_{\Omega \times \Omega} \|x-y\|^{\alpha} d\mu(x) d\mu(y)$ are fully supported in the boundary. 
\end{abstract}

\maketitle

\section{Introduction}

\subsection{Introduction}
Let $G=(V,E)$ be a finite, connected graph. We investigate a notion of `boundary', a subset of vertices $\partial G \subseteq V$.  At first, the notion of a `boundary' of a graph may appear a bit unusual -- a finite graph is a combinatorial object with no obvious complement: $G^c$ does not exist. Indeed, a graph can be fully described by its adjacency matrix $A \in \left\{0,1\right\}^{n \times n}$ and the notion of `boundary' may appear ill-defined also in that setting.
\begin{center}
\begin{figure}[h!]
\begin{tikzpicture}[scale=0.8]
\node at (0,0) {\includegraphics[width=0.2\textwidth]{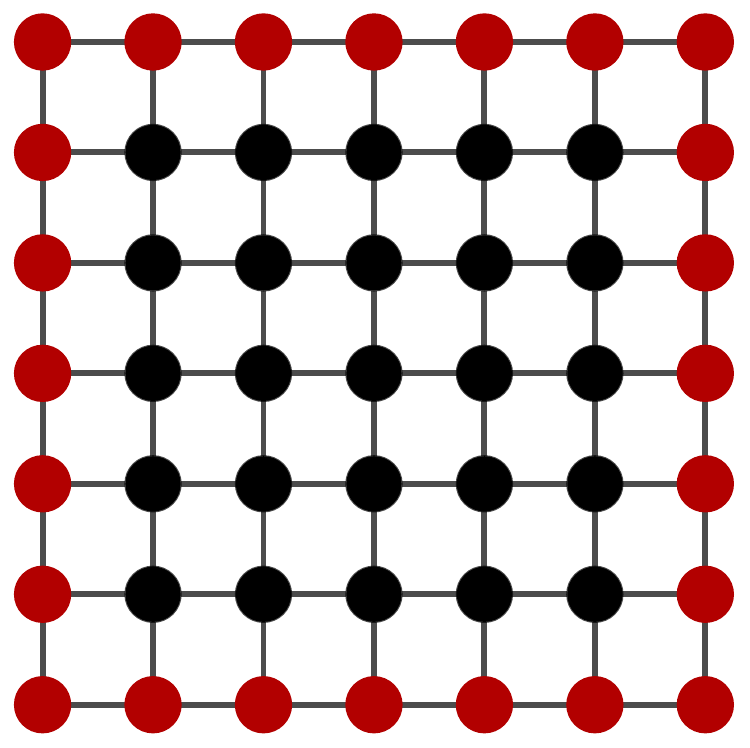}};
\node at (4.5,0) {\includegraphics[width=0.24\textwidth]{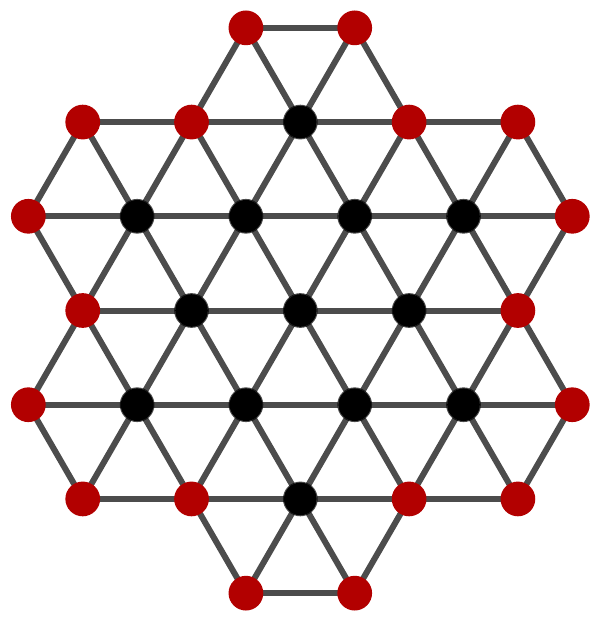}};
\node at (9.5,0) {\includegraphics[width=0.24\textwidth]{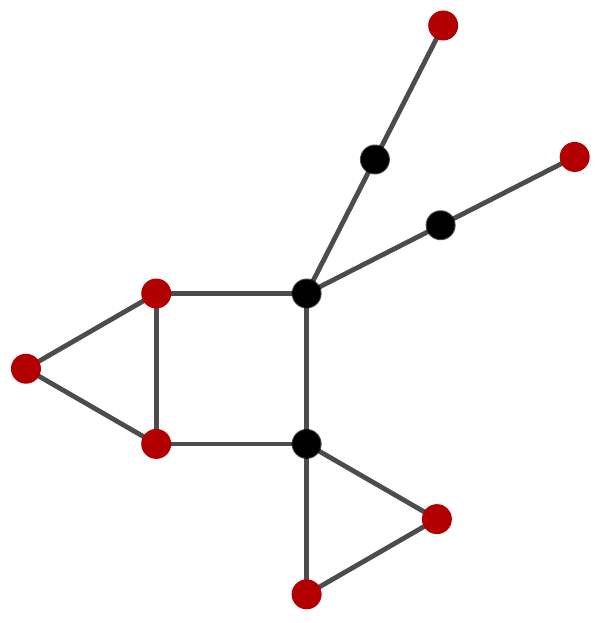}};
\end{tikzpicture}
\vspace{-10pt}
\caption{Graphs with their boundary vertices $\partial G$ in red.}
\end{figure}
\end{center}
On the other hand, graphs can also be regarded as metric spaces.  If we find ourselves in a closed room, the notion of boundary (say, a wall) makes sense independently of whether there is an external universe outside the room or not. To the best of our knowledge, such a notion of graph boundary was first proposed by Chartrand-Erwin-Johns-Zhang \cite{ch, ch2} and further developed in a variety of works, for example \cite{ce, eroh, has, mu, pel}. We will investigate a relaxation of the Chartrand-Erwin-Johns-Zhang boundary that was first proposed by the author \cite{stein}.

 \begin{quote}
 \textbf{Definition} (\cite{stein})\textbf{.} A vertex $v \in V$ is said to be in the boundary, $v \in \partial G$, if there exists another vertex $w \in V$ such that the average neighbor of $v$ is closer to $w$ than $v$ itself, formally
 $$  \frac{1}{\deg(v)} \sum_{(v,x) \in E} d(x, w) < d(v,w).$$
 \end{quote}
 
 This notion has several desirable elementary properties. If $G=(V,E)$ is a tree, the boundary $\partial G$ are exactly the vertices with degree 1 (the `leaves' of the tree).  In an arbitrary connected graph, vertices with degree 1 are always in the boundary.  Any connected graph has at least two boundary vertices with equality if and only if $G$ is a path (see also \cite{wxml}). Moreover, the definition of $\partial G$ seems to at least partially confirming to our intuition (see Figures 1,2 and 3). Moreover, if a graph `behaves' like a convex domain in $\mathbb{R}^d$, then $\partial G$ coincides with the Euclidean boundary in a suitable sense (see \cite[Proposition 4]{stein} for a precise statement).
 
 \subsection{Isoperimetric Inequality} The main result of \cite{stein} is that $\partial G$ satisfies an isoperimetric inequality. This inequality encodes the notion that a graph on many vertices should also have many vertices in its boundary unless it was somehow close to a path (since a path has only 2 boundary vertices). It is inspired by an analogous result in Euclidean space: there exists a constant $c_d > 0$ depending only on the dimension such that for all domains $\Omega \subset \mathbb{R}^d$ 
 $$ |\partial \Omega| \geq c_d \frac{ |\Omega|}{\mbox{diam}(\Omega)}.$$
 This inequality is not the classical isoperimetric inequality $|\partial \Omega| \geq c_{d,1} |\Omega|^{(d-1)/d}$, see \S 1.3, but is also sharp if and only if $\Omega$ is a ball (see Bieberbach \cite{bieberbach}).
 The main result of \cite{stein} is such an inequality for the graph boundary $\partial G$.
 \begin{thm}[Isoperimetric inequality, \cite{stein}] If $G$ is a finite, connected graph, then  
 $$ |\partial G| \geq  \frac{1}{2}\frac{1}{ \max_{v \in V} \deg(v)} \frac{|V|}{\emph{diam}(G)}.$$
 \end{thm}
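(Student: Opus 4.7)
My plan is a \emph{covering by the boundary} argument. First, I rewrite the definition: $v \notin \partial G$ is equivalent to
$$\frac{1}{\deg(v)} \sum_{x \sim v} d(x,w) \geq d(v,w) \qquad \text{for every } w \in V.$$
Since each neighbor $x$ of $v$ satisfies $|d(x,w) - d(v,w)| \leq 1$, and since $v$ (for $v \neq w$) always has a neighbor on a shortest $v$-$w$ path with $d(x,w) = d(v,w) - 1$, this averaging inequality forces the existence of at least one neighbor $x$ with $d(x,w) = d(v,w) + 1$. Call this an \emph{ascending} neighbor of $v$ with respect to $w$; thus every non-boundary $v \neq w$ possesses one.

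Next I would use ascending neighbors to flow every vertex to the boundary. Fix a boundary vertex $w_0 \in \partial G$ and, for each non-boundary $v$, select an ascending neighbor $\pi(v)$ with respect to $w_0$. Iterating $\pi$ from $v$ produces a sequence $v,\pi(v),\pi^2(v),\dots$ of strictly increasing $d(\cdot, w_0)$-value, hence of length at most $D - d(v, w_0) \leq D$, where $D = \mbox{diam}(G)$. The sequence must therefore terminate at a boundary vertex, yielding a map $\Phi : V \to \partial G$ with $d(v, \Phi(v)) \leq D$: every vertex is within graph distance $D$ of some boundary vertex.

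The heart of the argument is to control the fibers of $\Phi$. A naive branching analysis of the $\pi$-forest only gives an \emph{exponential}-in-$D$ bound on $|\Phi^{-1}(b)|$, because the branching factor at each level can be as large as $\max_{v \in V} \deg(v)$. To avoid this blow-up I would refine the choice of $\pi(v)$ --- for example, among the ascending neighbors of $v$, select one lying on a shortest path from $v$ to a fixed vertex $u^*$ realizing $d(u^*, w_0) = D$, so that the $\pi$-orbits are forced to travel along near-geodesics. With such a choice one should be able to show that at each of the $\leq D$ levels below a given $b \in \partial G$, only $O(\max_{v \in V}\deg(v))$ vertices project to $b$; summing over levels and boundary vertices yields $|V| \leq 2 (\max_{v \in V} \deg(v)) \cdot D \cdot |\partial G|$, which is exactly the claimed inequality. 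Making this polynomial fiber-size estimate precise is the main obstacle, since it must cope simultaneously with the freedom in choosing $\pi$ and the potentially wide level sets of $d(\cdot, w_0)$.
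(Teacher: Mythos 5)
Your opening steps are sound (they amount to Propositions 1 and 2 of the paper: an interior vertex has, for every $w$, at least as many neighbors farther from $w$ than closer ones, hence at least one strictly farther), but the proof is not complete: the whole content of the inequality sits in the fiber bound for $\Phi$, which you do not prove, and the refinement you sketch does not supply it. First, the refined $\pi$ may not exist: there need not be any vertex $u^\ast$ with $d(u^\ast,w_0)=\mathrm{diam}(G)$ (only the eccentricity of $w_0$ is guaranteed), and an ascending neighbor with respect to $w_0$ need not lie on any geodesic to $u^\ast$ — the two requirements can conflict. More seriously, where the rule is defined it pushes in the wrong direction: steering every orbit toward a single target concentrates the flow rather than spreading it. Take the $n\times n$ grid, $w_0=(1,1)$, $u^\ast=(n,n)$: at every interior vertex both the ``right'' and the ``up'' neighbor are ascending for $w_0$ and both lie on geodesics to $u^\ast$, so your rule leaves the choice free; resolving ties by always increasing the smaller coordinate funnels all $\sim n^2$ interior vertices through the diagonal into the single boundary vertex $(n,n-1)$. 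That fiber has size $\sim n^2$, whereas the bound you need is of order $\max_v \deg(v)\cdot \mathrm{diam}(G)\sim n$, so no per-level $O(\max_v\deg(v))$ estimate can hold for such a $\pi$. A proof along these lines would need a genuinely global mechanism forcing the flow to spread out, and that is exactly the missing ingredient you flag yourself.

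The gap can be closed by abandoning the vertex flow in favor of a global edge count over distance layers, which is essentially how the cited result is proved. Fix any $w\in V$, set $A_i=\{v\in V: d(v,w)=i\}$ for $0\le i\le m=\mathrm{ecc}(w)\le \mathrm{diam}(G)$, and let $E_i$ denote the number of edges between $A_{i-1}$ and $A_i$. Every $v\in A_i$ has a neighbor in $A_{i-1}$, so $|A_i|\le E_i$. By your first step (Proposition 2), every interior $v\in A_i$ has at least as many neighbors in $A_{i+1}$ as in $A_{i-1}$; summing over $v\in A_i$ gives $E_{i+1}\ge E_i-\Delta\,|A_i\cap\partial G|$, where $\Delta=\max_{v\in V}\deg(v)$. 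Telescoping from level $i$ up to level $m$ and using $E_{m+1}=0$ yields $E_i\le \Delta\sum_{j\ge i}|A_j\cap\partial G|\le \Delta\,|\partial G|$ for every $i\ge 1$. Hence $|V|=1+\sum_{i=1}^{m}|A_i|\le 1+\Delta\,\mathrm{diam}(G)\,|\partial G|$, and since $|V|\ge 2$ this gives $|\partial G|\ge (|V|-1)/(\Delta\,\mathrm{diam}(G))\ge \tfrac12\,|V|/(\Delta\,\mathrm{diam}(G))$, which is the claimed inequality; no choice of flow, and hence no fiber estimate, is required.
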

 
 This inequality is sharp up to possibly the dependence on the maximal degree.  Consider the grid graph $G$ on $V = \left\{1,2,\dots, n\right\}^d$ with two vertices connected by an edge if they differ by exactly one in exactly one coordinate. Up to constants depending on $d$, we have $|\partial G| \sim n^{d-1}$ while $|V| \sim n^d$ and $\mbox{diam}(G) \sim n$.

 \vspace{-5pt}
\begin{center}
\begin{figure}[h!]
\begin{tikzpicture}[scale=0.8]
\node at (0,0) {\includegraphics[width=0.18\textwidth]{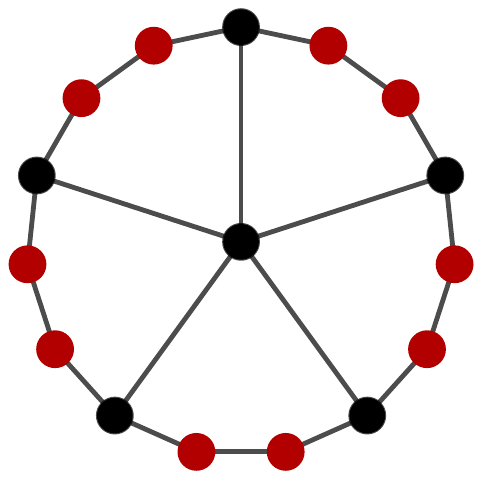}};
\node at (4.5,0) {\includegraphics[width=0.21\textwidth]{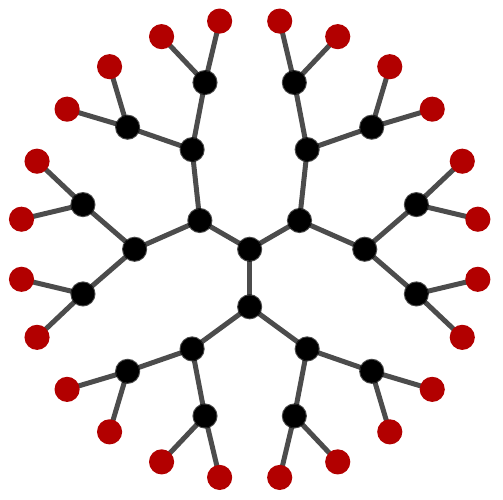}};
\node at (9.5,0) {\includegraphics[width=0.2\textwidth]{figf1.pdf}};
\end{tikzpicture}
\vspace{-10pt}
\caption{Graphs with their boundary vertices $\partial G$ in red.}
\end{figure}
\end{center}
 \vspace{-10pt}
 
 \subsection{Scaling.} The isoperimetric inequality is well-suited to illustrate an important principle when trying to prove versions of classical inequalities from Analysis. While the measure $|\Omega|$ of a domain is reasonably well captured by the number of vertices $\# V$ of a graph, there is no canonical notion of the `dimension' of a graph $G=(V,E)$. One could, conceivably, restrict one's attention to graphs that behave as if they have a certain dimension; one could, for example, require that the number of vertices in a ball scales as $\# B(v,r) \sim r^{d}$ or ask that the recurrence behavior of a random walk scales in a way that is consistent with the Euclidean case; however, the purpose of this paper is to consider universal inequalities that are true for all finite, connected graphs. \textit{If} a graph were to behave a like a well-behaved $d-$dimensional manifold, then one would expect that
 $ \mbox{diam}(\Omega) \sim |\Omega|^{1/d}.$
 Moreover, there is a classical inequality due to Bieberbach \cite{bieberbach} relating diameter and volume via $\mbox{diam}(\Omega) \geq c_{d} |\Omega|^{1/d}$ (with equality only for the ball). Thus, for arbitrary domains $\Omega \subset \mathbb{R}^d$ and arbitrary $\alpha > 0$ we have
\begin{align} \label{eq:scaling}
c_1(d, \alpha) \frac{|\Omega|}{\mbox{diam}(\Omega)^{(1-\alpha)d}} \leq  |\Omega|^{\alpha} \leq c_{2}(d, \alpha) \cdot \mbox{diam}(\Omega)^{d \alpha}
\end{align}
 The diameter of a graph $\mbox{diam}(G)$ is a completely natural quantity which suggests that \eqref{eq:scaling} might be a very natural substitute in the setting of graphs. All our inequalities become very simple when expressed in terms of powers of $\mbox{diam}(G)$.

    \vspace{-5pt}
\begin{center}
\begin{figure}[h!]
\begin{tikzpicture}[scale=0.8]
\node at (0,0) {\includegraphics[width=0.35\textwidth]{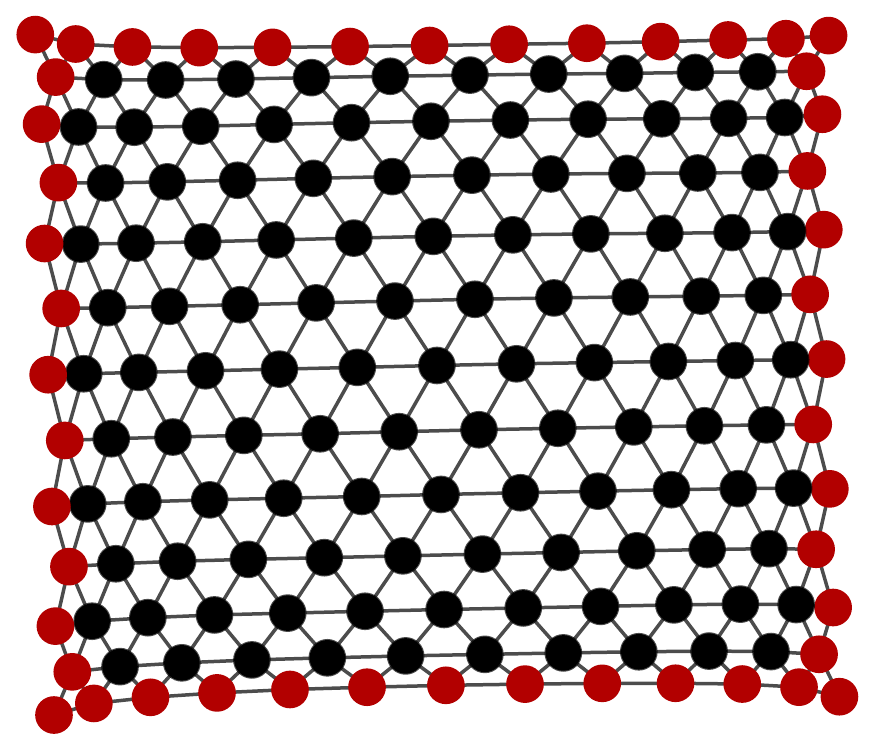}};
\node at (8,0) {\includegraphics[width=0.35\textwidth]{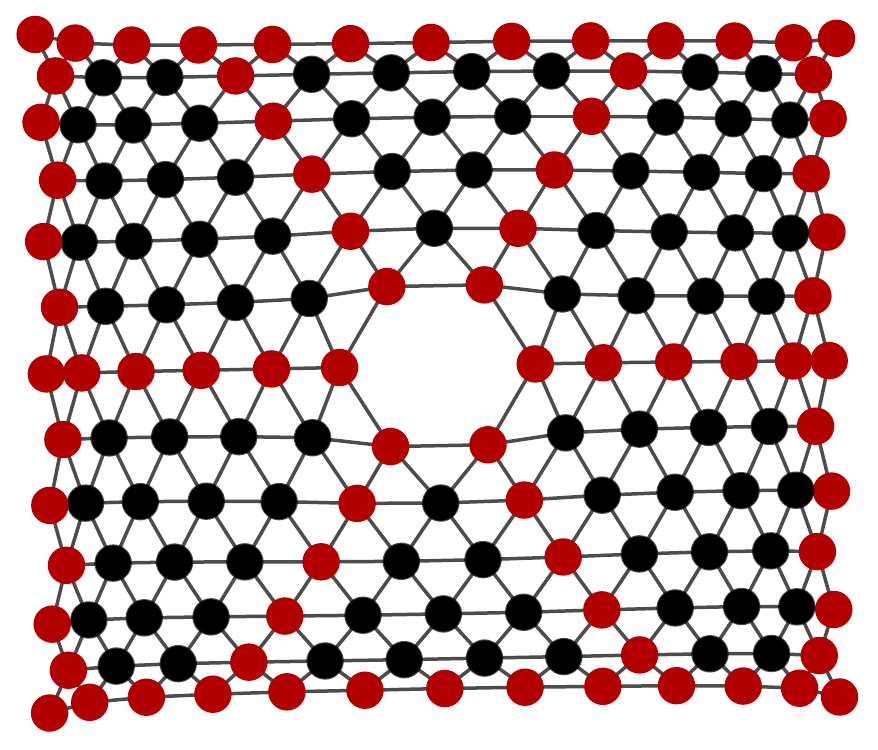}};
\end{tikzpicture}
\vspace{-10pt}
\caption{Graphs with their boundary vertices $\partial G$ in red. The removal of a single vertex can change the boundary everywhere.}
\end{figure}
\end{center}
 \vspace{-10pt}
  
 \subsection{The big picture.}
 The main purpose of this paper is to present several classical results that are true for functions $f: \Omega \rightarrow \mathbb{R}$ that vanish on the boundary of the domain $\partial \Omega$. We prove discrete versions on graphs $G=(V,E)$ for functions $f:V \rightarrow \mathbb{R}$ that vanish on the boundary $\partial G$. Graphs have traditionally been considered as objects where PDEs with (absent) Neumann boundary conditions make sense; the main purpose of this paper is to show that imposing Dirichlet boundary conditions on $\partial G$ leads to results that are consistent with the continuous theory. This suggests that $\partial G$ may be the natural notion of boundary of a graph from the point of view of potential theory and partial differential equations. This is probably not the end of the story. Just as there are \textit{many} definitions of boundary of a domain $\Omega \subset \mathbb{R}^d$ (topological boundary, measure-theoretic boundary, reduced boundary, Martin boundary...), it is quite conceivable that there are many more meaningful notions of boundary on a graph $G=(V,E)$ depending on the type of application one has in mind.

 \section{Results}

\subsection{Random walks.}
Let $\Omega \subset \mathbb{R}^d$ be a bounded domain. A random walk started in an arbitrary point inside of $\Omega$ has to leave $\Omega$ fairly quickly unless $\Omega$ is `large'. The natural continuous limit of a random walk is Brownian motion: if we pick an arbitrary point $x_0 \in \Omega$ and start Brownian motion $\omega_{x_0}(t)$ in that point, then the scaling law
$$\mathbb{E} ~\| \omega_{}(t) - \omega_{}(0)\|_{\ell^2(\mathbb{R}^d)} \sim \sqrt{t}$$
suggests that the average exit time should be when $\sqrt{t}$ is comparable to the `typical spatial extension' of $\Omega$, one way of measuring this is via the diameter $\mbox{diam}(\Omega)$.

\begin{thm}[P\'olya] Let $\Omega \subset \mathbb{R}^d$ be bounded, let $x_0 \in \Omega$ and consider Brownian motion started in $x_0$. If $T$ denote the first exit time, $\omega_{x_0}(T) \in \partial \Omega$, then
$$ \mathbb{E} ~T \leq  c_d \cdot \emph{diam}(\Omega)^2,$$
where $c_d$ depends only on the dimension.
\end{thm}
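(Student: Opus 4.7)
The plan is to reduce the statement to an explicit computation on a ball, using the obvious domain monotonicity of exit times. Fix $x_0 \in \Omega$ and set $R = \mbox{diam}(\Omega)$. By definition of diameter, $\Omega \subseteq B(x_0, R)$, so a single Brownian trajectory started at $x_0$ must exit the smaller set $\Omega$ no later than it exits the ball $B = B(x_0, R)$. Coupling the two exit times on the same trajectory gives $T \leq T_B$ pointwise, hence $\mathbb{E} T \leq \mathbb{E} T_B$. This step reduces matters to bounding the mean exit time from a ball of radius $R$ started at the center.

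Next I would compute $\mathbb{E}_{x_0} T_B$ explicitly. By It\^o's formula applied to $x \mapsto \|x - x_0\|^2$, the process $M_t = \|\omega(t) - x_0\|^2 - d\, t$ is a continuous martingale starting at $0$. Optional stopping at the bounded stopping time $T_B \wedge n$ and passing to the limit $n \to \infty$ (using that $T_B$ has exponential tails, which follows by comparing with the exit time of any single coordinate from an interval of length $2R$) yields
$$ 0 = \mathbb{E} \| \omega(T_B) - x_0\|^2 - d \cdot \mathbb{E} T_B = R^2 - d \cdot \mathbb{E} T_B,$$
so $\mathbb{E} T_B = R^2/d$. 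Combining the two steps gives
$$ \mathbb{E} T \leq \mathbb{E} T_B = \frac{\mbox{diam}(\Omega)^2}{d},$$
which is the assertion with the explicit constant $c_d = 1/d$.

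The only genuine subtlety is justifying the optional stopping, i.e.\ the integrability of $T_B$; once this is in place the proof is a two-line computation. One can bypass even that by giving a purely PDE argument: the mean exit time $u(x) = \mathbb{E}_x T_B$ solves $\tfrac{1}{2}\Delta u = -1$ in $B$ with $u = 0$ on $\partial B$, whose unique solution is $u(x) = (R^2 - \|x - x_0\|^2)/d$, so $u(x_0) = R^2/d$ directly. Either route produces the sharp-shape bound $c_d = 1/d$, and the main obstacle is really only the (standard) integrability/regularity input; the geometric content is entirely captured by the single observation $\Omega \subseteq B(x_0, \mbox{diam}(\Omega))$.
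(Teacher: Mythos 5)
Your proof is correct, and it takes a genuinely different and more classical route than what the paper suggests. The paper does not prove the continuous P\'olya theorem directly; it cites it and then, after proving the graph analogue (Theorem 1), remarks that the discrete argument---tracking the distance process $\|\omega(t)-x_0\|$, observing it is a Bessel process with rightward drift, and dominating by a one-dimensional random walk---translates to the continuous setting to give a probabilistic proof. That strategy mirrors the paper's combinatorial proof and is what makes it interesting there. You instead use domain monotonicity ($\Omega \subseteq B(x_0,\mbox{diam}(\Omega))$ hence $T_\Omega \leq T_B$ pathwise) together with the exact formula $\mathbb{E}_{x_0} T_B = R^2/d$, obtained either by optional stopping of the martingale $\|\omega(t)-x_0\|^2 - d\,t$ or by solving $\tfrac12\Delta u = -1$ in $B$ with Dirichlet data. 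Your approach is shorter, yields the explicit constant $c_d = 1/d$, and isolates the geometric input cleanly as a single containment; the paper's Bessel-process route is deliberately chosen because it is the argument that discretizes to graphs, not because it is the most efficient proof of the Euclidean statement. Both are valid; yours is the more standard textbook proof.

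One tiny remark: the justification of passing to the limit in optional stopping does not actually require exponential tails of $T_B$---it suffices that $T_B < \infty$ a.s.\ (standard for bounded domains) together with monotone convergence for the $d\,\mathbb{E}(T_B\wedge n)$ term and bounded convergence for the $\mathbb{E}\|\omega(T_B \wedge n)-x_0\|^2 \leq R^2$ term. Your exponential-tail route works, but it is slightly more than is needed.
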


 P\'olya \cite{polya} proved the stronger bound $\mathbb{E}~T \leq c_{d,2} |\Omega|^{2/d}$ with equality and only if $\Omega$ is the ball. As discussed in \S 1.3, we rephrase it in a way that de-emphasizes dependence on the dimension. This result has a direct analogue on graphs.

\begin{theorem} Let $G=(V,E)$ be a finite, connected graph. Let $v_0 \in V$ be arbitrary and consider a random walk where $v_{k+1}$ is chosen uniformly at random from the neighbors of $v_k$. The smallest time $T$  such that $v_{T} \in \partial G$ satisfies
$$ \mathbb{E}~ T \leq  \max_{v \in V} \deg(v) \cdot \emph{diam}(G)^2.$$
\end{theorem}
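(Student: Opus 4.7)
The plan is to adapt the classical Dynkin/Itô argument behind P\'olya's theorem. In the continuous setting one applies Itô's formula to $f(x) = \|x-w\|^2$, whose Laplacian is the positive constant $2d$, to obtain $\mathbb{E}\, T = (\mathbb{E}\, f(\omega(T)) - f(x_0))/(2d) \leq \mbox{diam}(\Omega)^2/(2d)$. The discrete surrogate I would use is $f(v) := d(v,w)^2$ with the target chosen as $w := v_0$, the starting vertex. Assuming $v_0 \notin \partial G$ (otherwise $T = 0$ and the bound is trivial), the first task is to show that the discrete Laplacian
$$ L f(v) \;:=\; \frac{1}{\deg(v)} \sum_{(v,x)\in E} f(x) \;-\; f(v) $$
is uniformly bounded below by $1/\max_u \deg(u)$ at every $v \notin \partial G$, playing the role of the constant $2d$ in Brownian motion.

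For this key pointwise lower bound, expand
$$ \sum_{(v,x)\in E} d(x,w)^2 \;=\; \deg(v)\, d(v,w)^2 \;+\; 2\, d(v,w) \sum_{(v,x)\in E}(d(x,w)-d(v,w)) \;+\; \sum_{(v,x)\in E}(d(x,w)-d(v,w))^2. $$
By the triangle inequality $d(x,w)-d(v,w) \in \{-1,0,1\}$, and when $v \neq w$ at least one neighbor of $v$ lies on a shortest $v$-to-$w$ path, so the last sum is $\geq 1$. Meanwhile, the very definition of $v \notin \partial G$ is precisely the statement that the middle sum is non-negative. Combining these gives $L f(v) \geq 1/\deg(v) \geq 1/\max_u \deg(u)$ for every $v \notin \partial G$ with $v \neq w$, and one checks directly that $Lf(w) = 1$ as well. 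This pointwise sub-harmonicity is the exact analogue of $\Delta \|x-w\|^2 \equiv 2d$ and is the heart of the argument.

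The rest is a standard martingale computation. The process $M_k := f(v_k) - \sum_{j=0}^{k-1} L f(v_j)$ is a martingale with respect to the natural filtration; since $G$ is finite and connected with $\partial G \neq \emptyset$, the hitting time $T$ has finite expectation and both $f$ and its one-step increments are uniformly bounded, so Doob's optional stopping theorem applies and yields
$$ \mbox{diam}(G)^2 \;\geq\; \mathbb{E}\, f(v_T) \;=\; \mathbb{E} \sum_{j=0}^{T-1} L f(v_j) \;\geq\; \frac{\mathbb{E}\, T}{\max_v \deg(v)}, $$
where in the last step we use that $v_j \notin \partial G$ for $j < T$. Rearranging produces the claimed bound. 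I expect the main obstacle to be the algebraic step of the second paragraph: one has to find exactly the right discrete surrogate for $\|x-w\|^2$ and convert the boundary definition (a statement about the \emph{linear} function $d(\cdot,w)$) into a pointwise lower bound on the Laplacian of its \emph{square}; once that identity is in hand the optional stopping part of the argument is routine.
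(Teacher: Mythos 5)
Your proof is correct, and it takes a genuinely different route from the paper. The paper tracks the scalar process $a_k = d(v_0,v_k)$, reduces to the subsequence of strict jumps $(b_\ell)$, dominates that by an unbiased walk on $\{-\mathrm{diam}(G),\dots,\mathrm{diam}(G)\}$ to get $\mathbb{E}\,(\text{number of jumps}) \leq \mathrm{diam}(G)^2$ (Lemma 1), bounds the expected waiting time between jumps by $\max_v\deg(v)/2$, and stitches these together with Wald's identity. You instead run the Dynkin/It\^o argument directly on the discrete potential $f(v)=d(v,v_0)^2$: the algebraic identity in your second paragraph cleanly separates the Laplacian of $f$ into a ``drift'' term $2d(v,w)S_1/\deg(v)$ (nonnegative off $\partial G$, by the very definition of interior vertex) plus a ``diffusion'' term $S_2/\deg(v) \geq 1/\deg(v)$, giving the pointwise sub-harmonicity $Lf\geq 1/\max_u\deg(u)$, after which optional stopping of the compensated martingale $M_k = f(v_k) - \sum_{j<k}Lf(v_j)$ yields exactly the stated constant. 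This is closer to the classical continuous proof and is shorter and more conceptual for this theorem in isolation. What the paper's route buys in exchange is the exponential tail bound $\mathbb{P}(T\geq k)\lesssim 2^{-k/(2\,\mathrm{diam}(G)^2)}$ from Lemma 1, which it reuses in the proofs of Theorems 2 and 4; your martingale argument as written gives only the expectation bound (one would have to iterate or appeal to Markov plus the strong Markov property to recover the tail decay, essentially reconstructing the paper's bootstrap).

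One small remark on rigor, which you already flag: optional stopping needs $\mathbb{E}\,T<\infty$ a priori, and you correctly observe this follows from finiteness and irreducibility together with $\partial G\neq\emptyset$ (every connected graph on at least two vertices has at least two boundary vertices). You also correctly handle the base case $v=w=v_0$ separately, where $S_1=\deg(v_0)$ rather than vanishing but $Lf(v_0)=1$ still holds. Nothing is missing.
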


This inequality is sharp up to at most the dependence on the largest degree: consider the grid graph $G=(V,E)$ on $V = \left\{1,2,\dots, n\right\}^d$ with any two vertices connected by an edge if they differ by exactly one in exactly one coordinate. In that case, the boundary $\partial G$ is exactly what one expects it to be (the `outer layer' of the cube). If one starts a random walk in the center, $x_0 = \left(\left\lfloor n/2 \right\rfloor, \left\lfloor n/2 \right\rfloor,\dots, \left\lfloor n/2 \right\rfloor\right)$, then it will take on average $\sim n^2 \sim \mbox{diam}(G)^2$ steps until one hits $\partial G$.

\subsection{The Faber-Krahn inequality} The Faber-Krahn inequality was first conjectured by Lord Rayleigh \cite{ray} when studying the theory of sound. This fundamental insight has a number of different formulations; a slightly unusual but completely elementary way of stating the result in modern language is as follows.

\begin{thm}[Faber-Krahn] Let $\Omega \subset \mathbb{R}^d$ and $f \in C^{\infty}_c(\Omega)$ be a smooth function that is compactly supported inside $\Omega$. Then
$$ \frac{\int_{\Omega} |\nabla f(x)|^2 dx}{\int_{\Omega} f(x)^2 dx} \geq \frac{c_d}{\emph{diam}(\Omega)^2},$$
where $c_d$ is a constant depending only on the dimension.
\end{thm}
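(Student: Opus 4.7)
The left-hand side is the Rayleigh quotient whose infimum over $f \in C_c^{\infty}(\Omega)$ is, by definition, the first Dirichlet eigenvalue $\lambda_1(\Omega)$ of $-\Delta$ on $\Omega$; it suffices to prove the stronger statement $\lambda_1(\Omega) \geq c_d/\mathrm{diam}(\Omega)^2$. My approach is to factor this through two classical ingredients already hinted at in \S 1.3: the Faber--Krahn inequality proper, which bounds $\lambda_1$ in terms of $|\Omega|$, together with Bieberbach's isodiametric inequality, which bounds $|\Omega|$ in terms of $\mathrm{diam}(\Omega)$ in the direction we need.

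First, I would invoke the classical Faber--Krahn inequality: among all domains of a given volume, the ball $\Omega^{\ast}$ of that volume minimizes $\lambda_1$. The standard proof is by Schwarz symmetrization: if $f$ is a first Dirichlet eigenfunction, its symmetric decreasing rearrangement $f^{\ast}$ satisfies $\int (f^{\ast})^2 = \int f^2$ by equimeasurability, together with the P\'olya--Szeg\H{o} inequality $\int |\nabla f^{\ast}|^2 \leq \int |\nabla f|^2$, which itself follows from the coarea formula combined with the Euclidean isoperimetric inequality. Since $f^{\ast} \in H_0^1(\Omega^{\ast})$, the variational characterization of $\lambda_1(\Omega^{\ast})$ yields $\lambda_1(\Omega) \geq \lambda_1(\Omega^{\ast})$. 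On a ball of radius $r$, separation of variables in spherical coordinates reduces the eigenvalue equation to a Bessel equation, giving $\lambda_1(B_r) = j_{d/2-1,1}^2 / r^2$, so that $\lambda_1(\Omega^{\ast}) = c_d \, |\Omega|^{-2/d}$.

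Second, I would apply Bieberbach's isodiametric inequality $|\Omega| \leq c_d' \, \mathrm{diam}(\Omega)^d$ cited in \S 1.3. This is precisely what is needed to convert the volume estimate into a diameter estimate: combining the two gives
$$ \lambda_1(\Omega) \;\geq\; c_d \, |\Omega|^{-2/d} \;\geq\; \frac{c_d''}{\mathrm{diam}(\Omega)^2}, $$
which is the desired bound.

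I do not expect any real obstacle, since both ingredients are classical; the only substantive piece is the Schwarz symmetrization argument underlying Faber--Krahn. The resulting formulation, with $\mathrm{diam}(\Omega)^2$ in place of $|\Omega|^{2/d}$ in the denominator, forfeits the sharp constant but is exactly the dimensionless form advocated in \S 1.3, and is the one that has a natural discrete counterpart on graphs where $|V|^{1/d}$ would make no sense but $\mathrm{diam}(G)$ does.
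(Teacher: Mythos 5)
Your proposal is correct and matches the route the paper itself sketches: it cites the classical Faber--Krahn bound $\lambda_1 \geq c_{d,2}|\Omega|^{-2/d}$ (remarking that "Faber and Krahn showed" this) and then converts to the diameter form via Bieberbach's isodiametric inequality exactly as laid out in \S 1.3. Your additional sketch of the Schwarz symmetrization/P\'olya--Szeg\H{o} argument for the volume form is the standard proof and introduces no new obstacle.
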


As with the result in \S 2.1, our phrasing of the inequality is done so as to avoid fractional powers, Faber and Krahn showed the lower bound $\geq c_{d,2} |\Omega|^{-2/d}$. A Faber-Krahn inequality for functions $f:V \rightarrow \mathbb{R}$ on a general graph $G=(V,E)$ by itself is meaningless: while the analytic expression
$$ \frac{\int_{\Omega} |\nabla f(x)|^2 dx}{\int_{\Omega} f(x)^2 dx} \qquad \mbox{has the direct analogue} \qquad \frac{\sum_{(u,v) \in E} (f(u) - f(v))^2}{ \sum_{v \in V} f(v)^2},$$
there is no nontrivial lower bound because one can always take $f \equiv 1$ to be a constant function.  In the language of partial differential equations, the first eigenvalue of the Neumann-Laplacian is always 0 while the Faber-Krahn inequality corresponds to the smallest eigenvalue of the Dirichlet-Laplacian which is positive. We show that with this particular notion of boundary $\partial G$, forcing the function to vanish on $\partial G$ leads to an inequality with the Euclidean scaling.

\begin{theorem}[Faber-Krahn] Let $G=(V,E)$ be a finite, connected graph.  Then, for all functions $f:V \rightarrow \mathbb{R}$ that vanish on $\partial G$, we have
$$ \frac{\sum_{(u,v) \in E} (f(u) - f(v))^2}{ \sum_{v \in V} f(v)^2} \geq \frac{1}{4}\frac{\min_{v \in V} \deg(v)}{\emph{diam}(G)^2}.$$
\end{theorem}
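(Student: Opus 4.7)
The plan is to apply the Cauchy--Schwarz inequality along shortest paths from interior vertices to $\partial G$, mirroring the standard one-dimensional proof of Poincar\'e's inequality. For each vertex $v \in V$, I would fix a shortest path $\gamma_v = (v = v_0, v_1, \ldots, v_k)$ from $v$ to a boundary vertex $v_k \in \partial G$, so that $k = d(v, \partial G) \le \operatorname{diam}(G)$. Since $f$ vanishes at $v_k$, the telescoping identity $f(v) = \sum_{i=0}^{k-1}(f(v_i) - f(v_{i+1}))$ together with Cauchy--Schwarz yields the pointwise bound
$$f(v)^2 \le k \sum_{e \in \gamma_v}(\Delta_e f)^2 \le \operatorname{diam}(G)\sum_{e \in \gamma_v}(\Delta_e f)^2,$$
where $\Delta_e f$ denotes the difference of $f$ across the edge $e$.

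Summing over $v \in V$ turns this into
$$\sum_{v \in V} f(v)^2 \le \operatorname{diam}(G) \sum_{e \in E} N(e)(\Delta_e f)^2, \qquad N(e) := \#\{v : e \in \gamma_v\},$$
so the problem reduces to controlling the edge-multiplicities $N(e)$. The natural move is to choose the paths $\gamma_v$ to form a shortest-path forest rooted at $\partial G$ (a BFS forest from the boundary), so that $N(e)$ is the size of the subtree of descendants hanging off the child end of $e$. One would then argue that, because every vertex has at least $\min_v \deg(v)$ neighbors, these subtrees cannot be too thick along any branch, giving a bound of the form $\max_e N(e) \le 4\operatorname{diam}(G)/\min_v\deg(v)$, from which the Faber--Krahn inequality follows.

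The hard part will be precisely this multiplicity bound. The double-counting is delicate because an adversarial BFS tree can in principle have subtrees of size $\sim |V|$, far larger than $\operatorname{diam}/\min\deg$; one has to use the branching imposed by the minimum-degree condition in a structural way, or else average the Cauchy--Schwarz inequality over several shortest paths (with suitable probabilities on paths) to spread the multiplicity across many edges. As a sanity check and a possible fallback route, one can combine Theorem 2 with a Perron--Frobenius argument applied to the substochastic simple-random-walk kernel restricted to $V \setminus \partial G$: if $\tilde\phi$ is the principal eigenfunction with eigenvalue $\mu$, then $\mathbb{P}_v(T>n) \ge \mu^n \tilde\phi(v)/\|\tilde\phi\|_\infty$ gives $\max_v \mathbb{E}_v[T] \ge 1/(1-\mu)$, and combining with Theorem 2 yields $\lambda_1 \ge \min\deg/(\max\deg \cdot \operatorname{diam}(G)^2)$. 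This has the correct order in $\operatorname{diam}(G)$ but carries an unwanted factor of $\max\deg$, so recovering the clean constant $\min\deg/(4\operatorname{diam}(G)^2)$ in the theorem really does require the more delicate direct path argument.
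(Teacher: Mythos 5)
Your main route has a genuine gap, and you correctly locate it yourself: the multiplicity bound $\max_e N(e) \le 4\operatorname{diam}(G)/\min_v\deg(v)$ is not established, and I do not think it can be established in the form you state. The defining property of $\partial G$ (Proposition~2 in the paper) is a pointwise \emph{drift} condition: each interior vertex has, with respect to every witness $w$, at least as many neighbors moving away from $w$ as moving towards it. This is exactly the kind of hypothesis that feeds a random-walk or supermartingale argument; it has no evident consequence for a path-routing / congestion problem. Indeed, the number of interior vertices can be of order $|V|$, so if there were even a single structural bottleneck — an edge every shortest path to the boundary is forced through — the multiplicity $N(e)$ there would be $\sim |V|$, which has nothing to do with $\operatorname{diam}(G)/\min\deg(v)$. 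Nothing in the definition of $\partial G$ rules this out, and it would take a new structural lemma (e.g., a flow/congestion statement tailored to $\partial G$) to make the Cauchy--Schwarz route work; your sketch conjectures this lemma rather than proving it.

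Ironically, the ``fallback'' you dismiss is the route the paper actually takes, and it does recover the stated constant. The paper works directly with the minimizer $f$ of the Dirichlet Rayleigh quotient (an eigenvector of the Laplacian $L_2$ obtained from $L$ by deleting the rows and columns indexed by $\partial G$). The eigenvalue equation, together with the normalization by $\min\deg$, gives the one-step inequality $\mathbb{E} f(v_{k+1}) \ge (1-q)f(v_k)$ with $q=\lambda_1(L_2)/\min_v\deg(v)$, for the random walk that is absorbed on $\partial G$. Starting the walk at the argmax of $f$ and using $f|_{\partial G}=0$, this yields $\mu_k(V\setminus\partial G) \ge (1-q)^k$, a lower bound on the survival probability. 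Comparing with the exponential \emph{upper} bound on the survival probability that follows from the exit-time analysis (Lemma~1/Theorem~1) and letting $k\to\infty$ gives $2^{-1/(2\operatorname{diam}(G)^2)} \ge 1-q$, hence $q \ge \tfrac{1}{4}\operatorname{diam}(G)^{-2}$ after an elementary estimate on $\log_2\frac{1}{1-q}$. The $\max\deg$ factor you feared does appear as a multiplicative prefactor in the survival bound, but it washes out after taking $k$-th roots, so the final constant is exactly $\min\deg/(4\operatorname{diam}(G)^2)$. In short: abandon the path-counting lemma, and instead run the eigenvalue equation against the random walk; that is where the drift structure of $\partial G$ actually pays off.
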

This inequality is optimal up to possibly the dependence on $\min_{v \in V} \deg(v)$. A simple example is the path graph on $n$ vertices; the two endpoints are the boundary vertices and minimizing over all functions gives a scaling of $\mbox{diam}(G)^{-2}$. More generally, the grid graph from \S 2.1 would provide another example.

\subsection{Hardy Inequality.}
The Hardy inequality is one of the most well-known inequalities in Analysis, there are countless variants and variations; we refer to the books \cite{hardy1, hardy2}. One particular representative formulation \cite[Theorem 3.3.4]{hardy1} is as follows: if $\Omega \subset \mathbb{R}^d$ is convex and $\delta(x) = \min_{y \in \partial \Omega} \|x-y\|$ denotes the distance to the boundary, then for any $f: \Omega \rightarrow \mathbb{R}^2$ vanishing on the boundary
$$ \int_{\Omega} |\nabla f|^2 dx \geq  \frac{1}{4}\int_{\Omega} \frac{f(x)^2}{\delta(x)^2} dx.$$
There is a well-established theory of Hardy inequalities on infinite graphs with seminal work
by Keller-Pinchover-Pogorzelski \cite{keller, keller2, keller4}. There has also been much interest in Hardy inequalities for specific infinite graphs, with some emphasis on infinite lattice-type graphs, for example \cite{luz, fischer, gupta, gupta2, keller3}. Using the notion of boundary $\partial G$, we describe a general Hardy inequality on arbitrary finite graphs. We replace the distance to the boundary with $\phi:V \rightarrow \mathbb{R}$ where $\phi(v)$ denotes the expected number of steps a random walk started in $v$ needs until hitting $\partial G$.

\begin{theorem}[Hardy Inequality] Let $G=(V,E)$ be a finite, connected graph and let $\phi(v)$ be the expected number of steps a random walk started in $v \in V$ needs until hitting $\partial G$. If $f:V \rightarrow \mathbb{R}$ vanishes on $\partial G$, then
$$ \sum_{(u,v) \in E} (f(u) - f(v))^2 \geq   \sum_{v \in V} \deg(v) \frac{f(v)^2}{\phi(v)}.$$
\end{theorem}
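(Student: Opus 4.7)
The plan is to perform a discrete ground-state substitution, adapted from the Keller--Pinchover--Pogorzelski framework for Hardy inequalities on graphs. First, by conditioning on the first step of the random walk, one sees that $\phi$ is the unique solution of the discrete Poisson problem
\[
\phi(v) = 1 + \frac{1}{\deg(v)} \sum_{u \sim v} \phi(u) \quad (v \notin \partial G), \qquad \phi|_{\partial G} = 0.
\]
Writing $L$ for the combinatorial Laplacian $(Lg)(v) = \sum_{u \sim v}(g(v) - g(u))$, this is precisely $(L\phi)(v) = \deg(v)$ on the interior. Consequently the right-hand side of the inequality is $\sum_v (L\phi)(v) f(v)^2/\phi(v)$, the discrete analogue of the continuum ground-state weight $-\Delta\phi/\phi$ that drives the classical Hardy inequality.

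The engine of the proof is the elementary edge-wise identity, valid whenever $\phi(u),\phi(v) > 0$:
\[
(f(u)-f(v))^2 = \phi(u)\phi(v)\left(\frac{f(u)}{\phi(u)} - \frac{f(v)}{\phi(v)}\right)^{2} + (\phi(u)-\phi(v))\left(\frac{f(u)^2}{\phi(u)} - \frac{f(v)^2}{\phi(v)}\right).
\]
This is a one-line algebraic verification after multiplying through by $\phi(u)\phi(v)$. For an edge incident to the boundary, say $u \in \partial G$, I would use the natural convention $f/\phi = f^2/\phi = 0$ at boundary vertices (consistent with $f(u) = \phi(u) = 0$); the identity then collapses to the tautology $f(v)^2 = \phi(v)\cdot f(v)^2/\phi(v)$.

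Summing over all edges, the first term on the right is non-negative because every summand carries the factor $\phi(u)\phi(v) \geq 0$. The second term, writing $h := f^2/\phi$, is $\sum_{(u,v)\in E}(\phi(u)-\phi(v))(h(u)-h(v))$, which standard discrete summation by parts rewrites as $\sum_v h(v)(L\phi)(v)$. Since $h$ vanishes on $\partial G$ while $(L\phi)(v) = \deg(v)$ on the interior, this is exactly $\sum_v \deg(v) f(v)^2/\phi(v)$, and discarding the non-negative first sum yields the desired inequality. The main obstacle is purely book-keeping: $\phi$ and $f$ vanish simultaneously on $\partial G$, so one must be careful that neither the substitution $g = f/\phi$ nor the summation by parts ever literally divides by zero. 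The convention above, together with the fact that boundary vertices contribute $0$ under the summation by parts, handles this cleanly.
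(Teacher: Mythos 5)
Your argument is correct and is essentially the paper's proof: the paper establishes Theorem 3 via an Agmon--Allegretto--Piepenbrink-type lemma (its Lemma 3) built on the same ground-state substitution $g = f/\phi$ with $\phi$ the expected hitting time, and unpacking its ``product-rule'' expansion of the quadratic form gives exactly your edge-wise Picone identity together with the identical non-negative remainder $\sum_{(u,v)\in E}\phi(u)\phi(v)\bigl(f(u)/\phi(u)-f(v)/\phi(v)\bigr)^2$. The only difference is cosmetic: the paper first isolates a reusable lemma for arbitrary supersolutions $\phi$ with $(D-A)\phi + W\phi \geq 0$ off a prescribed set and then specializes to $W=-\deg/\phi$ on $V\setminus\partial G$, whereas you carry out the computation directly and handle the boundary convention $g|_{\partial G}=h|_{\partial G}=0$ in line.
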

It mirrors the classical Hardy inequality insofar as $\phi(v)$ is small if one is close to the boundary and large if one is far from the boundary.
 Theorem 3 can be considered a discrete analogue of a torsional Hardy inequality \cite{brasco}. Note that Theorem 1 implies the uniform bound $\phi(v) \leq \max_{v \in V} \deg(v) \mbox{diam}(G)^2$ which shows that Theorem 3 implies another Faber-Krahn inequality.
 Our proof follows very classical ideas and we consider it a nice example of how existing ideas can be used in conjunction with the definition of $\partial G$. More concretely, we use the Agmon-Allegretto-Piepenbrink approach \cite{agmon, all1, all2, piep} to construct a positive super-solution for an associated quadratic form: this has traditionally failed because finite, connected graphs do not support nontrivial harmonic functions. We circumvent the problem by not asking for the property to hold in $\partial G$ and compensate with $f\big|_{\partial G}=0$.

\subsection{Alexandrov-Bakelman-Pucci.}
If $\Omega \subset \mathbb{R}^d$ is bounded and $u:\Omega \rightarrow \mathbb{R}$ is a harmonic function in $\Omega$, then the maximum is attained on the boundary
$$ \forall~x \in \Omega \qquad \quad  u(x) \leq  \max_{y \in \partial \Omega} u(y).$$
There is a natural stability version: if the Laplacian $\Delta u$ is not quite zero but small, then the function should be `almost' harmonic and the statement should be `almost' true. The Alexandrov-Bakelman-Pucci estimate \cite{ale1, ale2, bakel, puc1, puc2, stein0} is one way of making this intuition precise.
\begin{thm}[Alexandrov-Bakelman-Pucci] For any bounded $\Omega \subset \mathbb{R}^d$ and $u: \Omega \rightarrow \mathbb{R}$
$$ \max_{x \in \Omega} u(x) \leq  \max_{y \in \partial \Omega} u(y) + c_d \cdot \emph{diam}(\Omega)^2 \cdot \| \Delta u\|_{L^{\infty}(\Omega)},$$
where $c_d$ depends only on the dimension.
\end{thm}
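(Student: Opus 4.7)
The plan is to prove the estimate by comparison with an explicit torsion-type barrier on an enclosing ball. The classical route to ABP via the convex envelope of $u$ and the area formula yields the sharper $L^{d}$-version of the right-hand side; the simpler $L^{\infty}$-version stated here can be obtained in a few lines from an elementary super-solution of the Poisson equation on a ball.

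After translating, pick $x_{0} \in \mathbb{R}^d$ so that $\Omega \subset B(x_{0}, R)$ with $R \leq \mbox{diam}(\Omega)$, and set
\[ \psi(x) = \frac{R^{2} - |x - x_{0}|^{2}}{2d}. \]
Then $\psi \geq 0$ on $\overline{\Omega}$, $\sup_{\overline{\Omega}} \psi \leq R^{2}/(2d)$, and $-\Delta \psi \equiv 1$. Writing $M = \|\Delta u\|_{L^{\infty}(\Omega)}$, I introduce the comparison function $w(x) = u(x) - M \psi(x)$, for which
\[ \Delta w(x) = \Delta u(x) + M \geq 0 \quad \text{on } \Omega. \]
Hence $w$ is subharmonic on $\Omega$ in the classical sense, and the weak maximum principle gives $\max_{\overline{\Omega}} w \leq \max_{\partial \Omega} w \leq \max_{\partial \Omega} u$, where the last inequality uses $M\psi \geq 0$ on $\partial \Omega$. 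Rewriting $u = w + M \psi$ then yields, for every $x \in \Omega$,
\[ u(x) \leq \max_{\partial \Omega} u + M \, \psi(x) \leq \max_{\partial \Omega} u + \frac{1}{2d} \mbox{diam}(\Omega)^{2} \|\Delta u\|_{L^{\infty}(\Omega)}, \]
which is the claim with $c_{d} = 1/(2d)$.

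The proof is essentially three lines and I do not expect any serious obstacle; the one point worth flagging is the implicit regularity assumption $u \in C^{2}(\Omega) \cap C(\overline{\Omega})$ used to invoke the classical maximum principle. For rougher $u$ one either mollifies on the interior and passes to limits, or reverts to the full convex-envelope argument. It is also worth noting that this elementary approach does \emph{not} recover the sharp dimension-dependent constant: that would require replacing the enclosing-ball barrier with the true torsion function of $\Omega$ and invoking P\'olya's sharp bound $\sup \psi \lesssim |\Omega|^{2/d}$. Since the theorem is phrased in terms of the diameter (consistent with the scaling philosophy in \S 1.3), the trivial estimate $R \leq \mbox{diam}(\Omega)$ from the enclosing ball is already enough, and this is precisely the shape the discrete analogue will take, with $\psi$ replaced by the random-walk hitting time from Theorem~1.
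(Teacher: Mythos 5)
Your barrier argument is correct and is a genuinely different route from what the paper does. The paper does not give a self-contained proof of this $L^{\infty}$-version: it cites the classical ABP estimate $\max_\Omega u \leq \max_{\partial\Omega} u + c_d\,\mathrm{diam}(\Omega)\,\|\Delta u\|_{L^d(\Omega)}$ and then deduces the stated form by H\"older's inequality, $\|\Delta u\|_{L^d} \leq \|\Delta u\|_{L^\infty}|\Omega|^{1/d}$, together with the isodiametric (Bieberbach) bound $|\Omega|^{1/d}\leq c_d\,\mathrm{diam}(\Omega)$, in line with the scaling philosophy of \S 1.3. Your proof instead bypasses the $L^d$ ABP machinery (convex envelope, area formula) entirely and gets the $L^\infty$ statement in three lines from the paraboloid supersolution $\psi(x)=(R^2-|x-x_0|^2)/(2d)$ on an enclosing ball of radius $R=\mathrm{diam}(\Omega)$ plus the weak maximum principle for the subharmonic comparison function $w=u-M\psi$. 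What the paper's route buys is that it piggybacks on the sharp theorem; what yours buys is that it is elementary and self-contained, at the cost of never seeing the $L^d$ norm. Your closing observation is also well taken and on the paper's wavelength: replacing the enclosing-ball barrier by the actual torsion function and invoking P\'olya's $|\Omega|^{2/d}$ bound for its sup recovers the sharper $|\Omega|^{2/d}\|\Delta u\|_\infty$ form, and this torsion-function/exit-time picture is exactly the mechanism the paper transplants to graphs (Theorem~1 is used in the proof of the discrete Theorem~4 precisely in the role your $\psi$ plays here). The only caveat, which you flag yourself, is the regularity $u\in C^2(\Omega)\cap C(\overline\Omega)$ needed to invoke the classical maximum principle; the paper's statement is informal about this too, so it is not a gap relative to what is being claimed.
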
 
The original ABP estimate has the quantity $c_d \cdot \mbox{diam}(\Omega) \cdot \| \Delta u\|_{L^{d}(\Omega)}$ which, following the approach outlined in \S 1.3, we rewrite using
$$\| \Delta u\|_{L^{d}(\Omega)} \leq \| \Delta u\|_{L^{\infty}(\Omega)} |\Omega|^{1/d} \leq c_d  \cdot \mbox{diam}(\Omega)\cdot  \| \Delta u\|_{L^{\infty}(\Omega)}$$
 into a quantity only containing the diameter $\mbox{diam}(\Omega)$.
One could wonder about analogous results on graphs. This requires defining the notion of harmonic function on a graph and a corresponding Laplacian operator that maps a function $f:V \rightarrow \mathbb{R}$ to another function $(Lf):V \rightarrow \mathbb{R}$.
We use a standard definition (also known as the Kirchhoff-Laplacian) that takes the mean-value theorem as its inspiration. A function $f:V \rightarrow \mathbb{R}$ is harmonic in a vertex $v \in V$ if
$$ \sum_{(v,w) \in E} (f(v) - f(w)) = 0 \qquad \mbox{or, equivalently,} \qquad \frac{1}{\deg(v)} \sum_{(v,w) \in E} f(w) = f(v).$$
The Graph Laplacian is then defined as  
$$ (Lf)(v) = \sum_{(v,w) \in E} (f(v) - f(w)).$$
  If $G=(V,E)$ is a finite, connected graph and $X \subset V$ is an arbitrary set of the vertices and if $f:V \rightarrow \mathbb{R}$ is harmonic on $V \setminus X$, then the maximum principle applies
$$ \max_{v \in V \setminus X} f(v) \leq  \max_{w \in  X} f(w).$$
  In the spirit of an Alexandrov-Bakelman-Pucci estimate, one could now wonder about the best constant $C(X)$ in the inequality
$$ \max_{v \in V} f(v) \leq  \max_{w \in  X} f(w) + C(X) \| Lf \|_{L^{\infty}(V \setminus X)}.$$
The existence of such a constant $C(X) < \infty$ is not hard to see (see \S 7.1).  
We show that if choose $X = \partial G$, then the corresponding Alexandrov-Bakelman-Pucci estimate has a constant that mirrors the Euclidean setting.
\begin{theorem}[Alexandrov-Bakelman-Pucci]  Let $G=(V,E)$ be a finite, connected graph. For any $f:V \rightarrow \mathbb{R}$
$$  \max_{v \in V} f(v) \leq  \max_{w \in \partial G} f(w) +    \frac{2 \max_{v \in V} \deg(v)}{\min_{v \in V} \deg(v)} \cdot \emph{diam}(G)^2 \cdot  \| Lf \|_{L^{\infty}(V \setminus \partial G)}.$$
\end{theorem}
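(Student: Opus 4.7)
The plan is to mirror the classical proof of the continuous ABP via a torsion function, using the expected hitting time supplied by Theorem 1 as a barrier. Let $\phi : V \to \mathbb{R}$ be the expected number of random walk steps needed to reach $\partial G$ starting from $v$; in particular $\phi \equiv 0$ on $\partial G$. Conditioning on the first step of the walk yields the recursion
\[ \phi(v) = 1 + \frac{1}{\deg(v)} \sum_{(v,w) \in E} \phi(w) \quad \text{for } v \in V \setminus \partial G, \]
which rearranges to $(L\phi)(v) = \deg(v)$ on $V \setminus \partial G$. Thus $\phi$ is a discrete analogue of the torsion function, and the fact that $L\phi$ equals $\deg(v)$ rather than a constant is precisely what will produce a ratio involving $\min_v \deg(v)$ in the final estimate.

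Given $f$, I would set $\alpha = \|Lf\|_{L^\infty(V \setminus \partial G)} / \min_{v \in V} \deg(v)$ and consider the comparison function $g := f - \alpha \phi$. For $v \in V \setminus \partial G$ one has $(Lg)(v) = (Lf)(v) - \alpha \deg(v)$; since $\alpha \deg(v) \geq \alpha \min_u \deg(u) = \|Lf\|_{L^\infty(V \setminus \partial G)} \geq (Lf)(v)$, we obtain $(Lg)(v) \leq 0$ throughout $V \setminus \partial G$. Unfolding the Laplacian, this says $g(v) \leq \frac{1}{\deg(v)} \sum_{(v,w) \in E} g(w)$ at every interior vertex. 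A standard propagation argument — if $g$ attains its maximum at some $v_0 \in V \setminus \partial G$, then all neighbors of $v_0$ must also attain the maximum, and by connectedness this propagates to a vertex in $\partial G$ — yields $\max_V g = \max_{\partial G} g = \max_{\partial G} f$, the last equality because $\phi$ vanishes on $\partial G$.

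Since $f = g + \alpha \phi$ and $\phi \geq 0$, it follows that for every $v \in V$
\[ f(v) \leq \max_V g + \alpha \max_V \phi \leq \max_{\partial G} f + \alpha \max_V \phi. \]
Theorem 1 bounds $\max_V \phi \leq \max_{v \in V} \deg(v) \cdot \mathrm{diam}(G)^2$, so inserting the definition of $\alpha$ yields
\[ \max_V f \leq \max_{\partial G} f + \frac{\max_v \deg(v)}{\min_v \deg(v)} \cdot \mathrm{diam}(G)^2 \cdot \|Lf\|_{L^\infty(V \setminus \partial G)}, \]
which is at least as strong as the stated bound.

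There is no deep obstacle here: the proof is essentially an Agmon-style barrier construction. The main conceptual step is recognizing that the random walk hitting time from Theorem 1 is exactly the right torsion-type function to compare $f$ against; the minor technical wrinkle is that the discrete torsional equation reads $L\phi = \deg(v)$ rather than $L\phi = 1$, which is what forces the rescaling by $1/\min_v \deg(v)$.
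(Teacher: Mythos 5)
Your proof is correct, and it takes a genuinely different route from the paper's. You run a deterministic barrier argument: identify the expected hitting time $\phi$ as a discrete torsion function with $L\phi = \deg$ off the boundary, subtract a suitable multiple of it from $f$ to produce a subharmonic comparison function $g$, and then invoke the discrete maximum principle (plus the bound $\max_V \phi \le \max_v\deg(v)\cdot\mathrm{diam}(G)^2$ from Theorem 1) to conclude. The paper instead argues probabilistically: it unfolds the Laplacian equation into the random-walk recursion $f(v_k) \le \tfrac{\|\max\{Lf,0\}\|_\infty}{\min\deg} + \mathbb{E}f(v_{k+1})$, iterates for $k = 2\max_v\deg(v)\cdot\mathrm{diam}(G)^2$ steps, uses Markov's inequality to guarantee that at least half the paths have reached $\partial G$, and then rearranges. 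The two approaches are dual views of the same estimate --- barrier versus random walk --- and both rely on Theorem 1 as the source of the $\mathrm{diam}(G)^2$ scaling. Your version is cleaner and in fact gives a slightly sharper constant (the factor $2$ in the stated theorem is an artifact of the Markov-inequality step in the paper's argument; your barrier computation does not incur it). One small thing the paper observes and you could match for free: since the argument only needs $\alpha\deg(v)\ge (Lf)(v)$ rather than $\alpha\deg(v)\ge |(Lf)(v)|$, you could replace $\|Lf\|_{L^\infty(V\setminus\partial G)}$ by $\|\max\{Lf,0\}\|_{L^\infty(V\setminus\partial G)}$ in the definition of $\alpha$, yielding the same one-sided refinement the paper notes.
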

The estimate is sharp up to possibly the dependence on the maximum and minimum degree. A simple example is the path graph on the vertices $\left\{0, 1, 2,  \dots, n-1, n \right\}$ with $\partial G = \left\{0, n \right\}$. Consider the function $f:V \rightarrow \mathbb{R}$ giving the expected number of steps an unbiased random walk needs to hit either $0$ or $n$. It is easy to see that $f$ vanishes in $\partial G$ and satisfies $Lf = 1$ inside the path. It is also known that $f(\left\lfloor n/2 \right\rfloor) \sim n^2$ which is exactly the rate predicted by Theorem 4. The proof of Theorem 4 shows a slightly stronger result: instead of considering $Lf$, it suffices to consider $ \max \left\{(Lf)(v),0 \right\}$, the only difficulty comes from vertices where $(Lf)(v) > 0$, these being the ones exceeding the average value of their neighbors.

\subsection{Hot Spots Inequalities.}
One of the more mysterious problems in Partial Differential Equations is the \textit{Hot Spots Conjecture} of Jeffrey Rauch.  The observation is that if $\Omega \subset \mathbb{R}^d$ is a bounded domain, then the first nontrivial eigenfunction of the Neumann Laplacian, $-\Delta \phi = \lambda_2 \phi$, describes the long-term behavior of the heat equation. It is also the solution of the variational problem
$$ \phi = \arg\min_{\int_{\Omega} f(x) dx = 0} \frac{ \int_{\Omega} |\nabla f(x)|^2 dx}{ \int_{\Omega} f(x)^2 dx} .$$
 Rauch asked whether $\phi$ attains its maximum and minimum value on the boundary of the domain.
What makes the entire question somewhat mysterious is that
\begin{itemize}
\item it is not always the case, maxima and minima can be inside the domain,
\item but counterexamples are \textit{very} difficult to construct \cite{burdzy} and
\item it is not clear what the right assumptions on $\Omega$ are supposed to be.
\end{itemize}
It was widely assumed that $\Omega \subset \mathbb{R}^d$ being convex should be sufficient for the statement to be true, however, this was recently disproven by Jaume de Dios Pont \cite{pont} in sufficiently high dimensions. For convex $\Omega \subset \mathbb{R}^2$, the statement may still be true. Another widely circulated conjecture is that for planar domains $\Omega \subset \mathbb{R}^2$ it may be enough to require that $\Omega$ is simply connected. The very same question can naturally be asked on graphs \cite{dep, ken1, ken2, led}: if $G=(V,E)$ is a connected graph and $f:V \rightarrow \mathbb{R}$ is an eigenvector of the Graph Laplacian corresponding to the smallest positive eigenvalue, where can $f$ assume its maximum and minimum?

\vspace{5pt}
\begin{center}
\begin{tikzpicture} 
 \node at (0,0) {\includegraphics[width=0.45\textwidth]{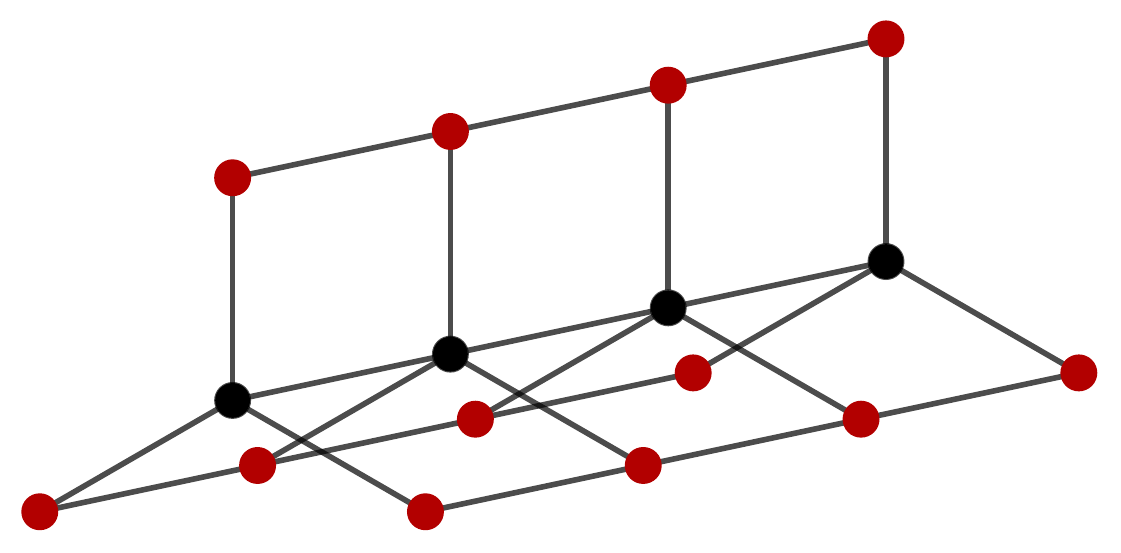}};
 \node at (6,0) {\includegraphics[width=0.45\textwidth]{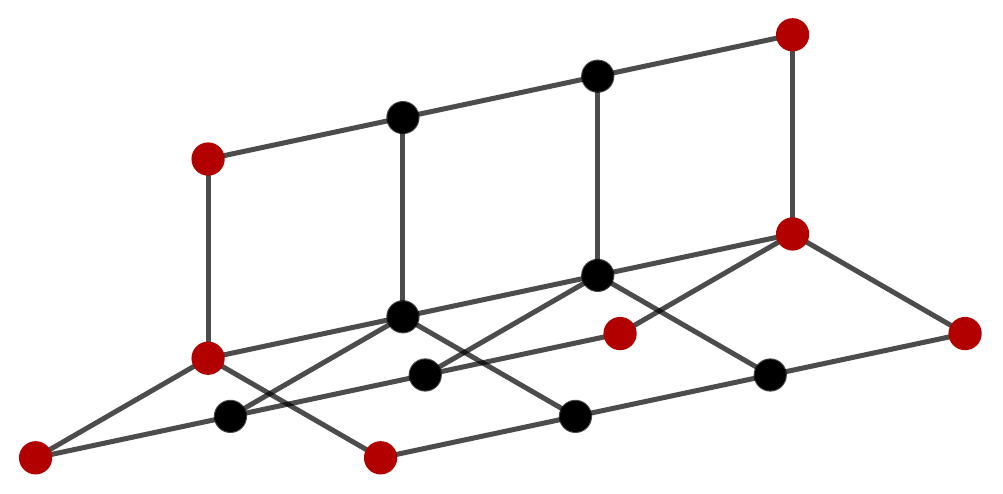}};
 \node at (0, -1.7) {The boundary $\partial G$};
  \node at (6, -1.7) {Location of hot/cold spots};
\end{tikzpicture}
\end{center}

More precisely, if one claims to have some notion of boundary, is it true that the largest/smallest entry of the eigenvector tend to be assumed in $\partial G$? We observe that this seems to be the case for most (but not all) graphs.

\begin{quote}
\textbf{Observation.} For \textit{most} graphs $G=(V,E)$, the second eigenvector seems to attain its maximum and minimum in $\partial G$.
\end{quote}
Much like in the Euclidean case, counterexamples seem to be rare and difficult to find: the ones shown are taken from a handful examples that were found after checking the $>10000$ examples stored in Mathematica's database.

\vspace{5pt}
\begin{center}
\begin{tikzpicture} 
 \node at (0,0) {\includegraphics[width=0.25\textwidth]{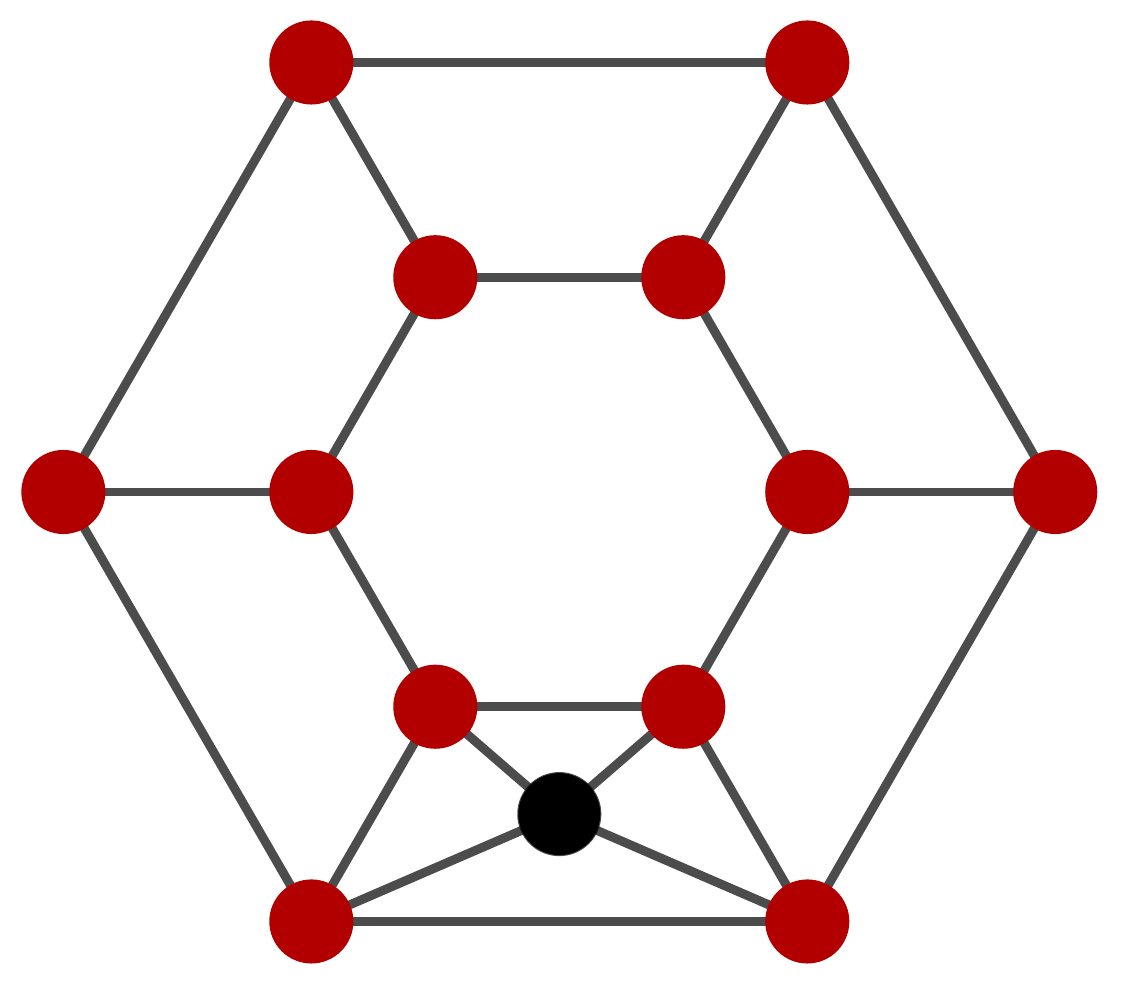}};
 \node at (6,0) {\includegraphics[width=0.25\textwidth]{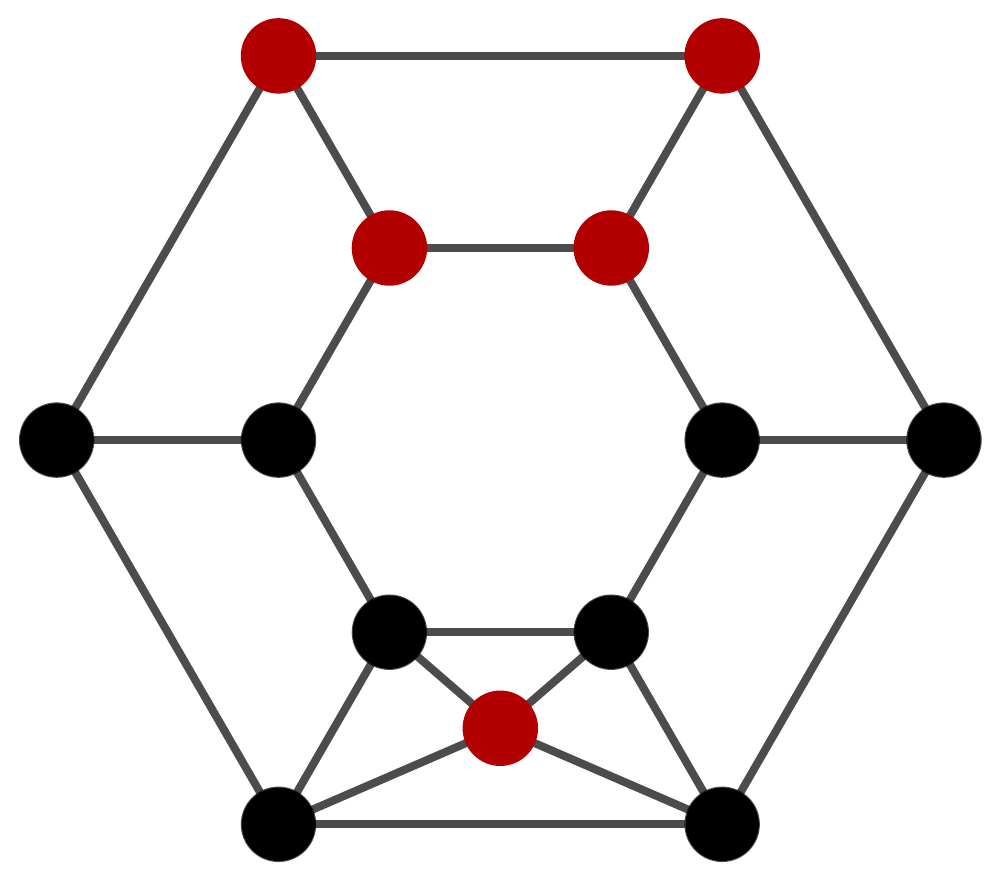}};
 \node at (0, -1.9) {The boundary $\partial G$};
  \node at (6, -1.9) {Location of hot/cold spots};
\end{tikzpicture}
\end{center}
\vspace{5pt}

 We believe this to be, perhaps in a subtle way, another indicator that $\partial G$ is the `right' definition: it seems to preserve the mysterious character of the Hot Spots conjecture in the sense that it is false but only rarely so. Regarding the (continuous) Hot Spots conjecture, some positive results are known.

\begin{thm}[\cite{stein4}] If $\Omega \subset \mathbb{R}^d$ is a bounded domain with smooth boundary and $u$ is an eigenfunction of the Laplacian with Neumann boundary conditions corresponding to the smallest positive eigenvalue, then
$$ \max_{x \in \Omega} u(x) \leq 60 \max_{x \in \partial \Omega}u(x).$$
\end{thm}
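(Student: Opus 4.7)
The plan is to use a Feynman--Kac representation. Since $-\Delta u = \mu_2 u$, an application of Ito's formula shows that $e^{\mu_2 t/2} u(B_t)$ is a local martingale for Brownian motion $B_t$ with generator $\frac{1}{2}\Delta$: the drift $\frac{\mu_2}{2} u + \frac{1}{2} \Delta u$ vanishes identically by the eigenvalue equation. Letting $\tau$ denote the first exit time from $\Omega$, which has finite expectation, optional stopping yields the representation
\[
u(x) \;=\; \mathbb{E}_x \big[ e^{\mu_2 \tau/2} \, u(B_\tau) \big], \qquad x \in \Omega.
\]

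Next, I would reduce to the interesting case: replacing $u$ by $-u$ if necessary, I may assume $M := \max_{\Omega} u$ is strictly positive and attained at some interior point $x^* \in \Omega$ (otherwise the bound is either trivial or the maximum already lies on $\partial \Omega$). Applying the representation at $x^*$ and using the pointwise bound $u(B_\tau) \le m := \max_{\partial \Omega} u$ on the boundary gives
\[
M \;\le\; m \cdot \mathbb{E}_{x^*}\big[ e^{\mu_2 \tau/2} \big],
\]
which in particular forces $m > 0$. The task thus reduces to proving a uniform upper bound $\mathbb{E}_{x^*}\big[ e^{\mu_2 \tau/2} \big] \le 60$, independent of the domain.

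The hard part is this exponential-moment estimate. A soft argument based on Filonov's inequality $\mu_2 < \lambda_1^D(\Omega)$ shows the moment is finite, but the resulting bound degenerates like $(1 - \mu_2/\lambda_1^D)^{-1}$, which is not uniform across smooth domains (it explodes for slender geometries). To extract a dimension-free numerical constant, the plan is to exploit that $x^*$ is specifically a \emph{maximum} of $u$, not an arbitrary point: the Hessian test combined with $-\Delta u(x^*) = \mu_2 u(x^*) \ge 0$ forces $u$ to decrease on a definite spatial scale of order $\mu_2^{-1/2}$ around $x^*$, and on the corresponding time scale $e^{\mu_2 \tau/2}$ is still of order one. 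A Markov-type decomposition of the Brownian path into a short initial segment --- where one bounds $u(B_t)$ via the local concavity of $u$ at $x^*$ --- followed by an unconstrained tail excursion in which the process has already reached a region where $u$ is significantly smaller than $M$ should then deliver the universal constant, with the number $60$ emerging from optimizing the splitting time. The chief obstacle is making this splitting quantitative without picking up any dimension-dependent factors.
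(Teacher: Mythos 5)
This theorem is only \emph{quoted} in the present paper, not proved here; the result belongs to the cited reference (note that the citation key appears to be misassigned: the bibliography entry for the hot-spots upper bound is \cite{stein5}, not \cite{stein4}, which is the graph-curvature paper). With that caveat, your attempt can still be measured against the published argument, which is indeed probabilistic.

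Your steps 1 and 2 are correct and coincide with the published starting point. The Feynman--Kac identity $u(x)=\mathbb{E}_x[e^{\mu_2\tau/2}u(B_\tau)]$ does follow from It\^o's formula and optional stopping once one invokes Friedlander--Filonov's strict inequality $\mu_2<\lambda_1^D(\Omega)$ to justify uniform integrability, and applying it at the interior maximum $x^*$ correctly reduces matters to showing $\mathbb{E}_{x^*}[e^{\mu_2\tau/2}]\le 60$. You are also right that the soft bound explodes as $\mu_2/\lambda_1^D(\Omega)\to 1$.

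The gap is step 3, and I do not believe the route you sketch can work. The Hessian test at $x^*$ only gives $\operatorname{tr}D^2u(x^*)=-\mu_2u(x^*)\le 0$: in dimension $d$ this allows one Hessian eigenvalue of size $\sim-\mu_2u(x^*)$ and $d-1$ eigenvalues equal to zero, so $u$ may barely decay in most directions near $x^*$; any decay rate extracted this way is inherently dimension-dependent. More fundamentally, the exponential moment $\mathbb{E}_{x^*}[e^{\mu_2\tau/2}]$ is controlled by the \emph{global} geometry of $\Omega$ (through the exit-time distribution all the way to $\partial\Omega$), which no amount of local information at $x^*$ can constrain. Your Markov decomposition into a short initial segment plus a ``tail excursion'' has to control exactly that tail, and you offer no mechanism to do so without domain- or dimension-dependence.

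The idea you are missing is to stop the walk before it reaches $\partial\Omega$: replace $\Omega$ by the superlevel set $\Omega^*=\{u>m\}$, where $m=\max_{\partial\Omega}u$, which contains $x^*$ and on whose boundary $u$ equals exactly $m$. Writing $v=u-m$, one has the \emph{exact} Dirichlet problem $(-\Delta-\mu_2)v=\mu_2 m$ in $\Omega^*$, $v|_{\partial\Omega^*}=0$, and Feynman--Kac then upgrades your inequality to the identity $M/m=\mathbb{E}_{x^*}[e^{\mu_2\tau^*}]$ for the exit time $\tau^*$ from $\Omega^*$. Pairing the PDE against the first Dirichlet eigenfunction $\phi_1$ of $\Omega^*$ gives $(\lambda_1(\Omega^*)-\mu_2)\int_{\Omega^*}v\phi_1=\mu_2 m\int_{\Omega^*}\phi_1>0$, i.e.\ a spectral gap for $\Omega^*$ that is quantitatively tied to the very ratio $M/m$ one is trying to bound. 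Closing this self-referential inequality with a complementary upper bound on the exponential moment in terms of $\lambda_1(\Omega^*)/\mu_2$ is the crux of the published proof and is where the numerical constant $60$ comes from. Nothing there relies on local concavity of $u$ at $x^*$, and the whole point of passing to $\Omega^*$ is precisely to avoid the degeneracy you flagged.
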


The constant 60 has since been improved by Mariano-Panzo-Wang \cite{mariano}.  We will show that a similar result is possible on general graphs.
 
\begin{theorem} Let $G=(V,E)$ be a finite and connected graph, let $Lf = \lambda_2 f$ where $\lambda_2$ is the smallest positive eigenvalue and $\lambda_2 < 1$. Then
  $$ \max_{v \in V \setminus \partial G} f(v) \leq   \left(1 - \frac{\lambda_2}{\min_{v \in V} \deg(v)}\right)^{- 2\max_{v \in V} \emph{\tiny diam}(G)^2}  \max_{w \in \partial G}f(w).$$
\end{theorem}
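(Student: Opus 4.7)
The plan is to reinterpret the eigenvalue equation $Lf = \lambda_2 f$ as a martingale identity for the simple random walk on $G$, apply the optional stopping theorem at the first visit to $\partial G$, and then control the resulting exponential moment using the mean hitting time estimate from Theorem 1.

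First, I rewrite $Lf(v) = \lambda_2 f(v)$ in the equivalent form
$$ \frac{1}{\deg(v)} \sum_{w \sim v} f(w) = \left(1 - \frac{\lambda_2}{\deg(v)}\right) f(v) =: \rho_v f(v), $$
valid for every $v \in V$. Since $\lambda_2 < 1 \leq \min_v \deg(v)$, every $\rho_v$ lies in $(0,1)$. For a simple random walk $v_0, v_1, v_2, \dots$ this identity is the same as $\mathbb{E}[f(v_{k+1}) \mid v_k] = \rho_{v_k} f(v_k)$, which exhibits
$$ M_k := f(v_k) \prod_{i=0}^{k-1} \rho_{v_i}^{-1} $$
as a martingale with respect to the walk's natural filtration. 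I then fix $v^{*} \in V \setminus \partial G$ attaining $\max_{V \setminus \partial G} f$, start the walk at $v_0 = v^{*}$, and let $T$ be the first time $v_T \in \partial G$. Since $\mathbb{E}_{v^{*}}[T] < \infty$ by Theorem 1, optional stopping applied (up to a truncation $T \wedge n$ and a dominated-convergence argument) yields
$$ f(v^{*}) = \mathbb{E}\left[ f(v_T) \prod_{i=0}^{T-1} \rho_{v_i}^{-1}\right] \leq \left(\max_{w \in \partial G} f(w)\right) \cdot \mathbb{E}\left[\rho_{\min}^{-T}\right], $$
where $\rho_{\min} := 1 - \lambda_2/\min_v \deg(v)$; here I use $\rho_{v_i}^{-1} \leq \rho_{\min}^{-1}$ and $f(v_T) \leq \max_{\partial G} f$. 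The latter is harmless because, when $f(v^{*}) > 0$, the martingale identity itself forces some $w \in \partial G$ with $f(w) > 0$ (eigenvectors of $L$ with $\lambda_2 > 0$ are mean-zero, so the positive case is the relevant one and the negative case follows by replacing $f$ with $-f$).

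The remaining and principal step is to bound the exponential moment $\mathbb{E}[\rho_{\min}^{-T}]$, and this is where the main obstacle lies. Setting $\tau := \max_v \deg(v) \cdot \mathrm{diam}(G)^2$, Theorem 1 gives $\mathbb{E}_u[T] \leq \tau$ uniformly in the starting vertex $u$, so Markov's inequality yields $\mathbb{P}_u(T > 2\tau) \leq 1/2$, and iterating this via the strong Markov property gives $\mathbb{P}(T > 2k\tau) \leq 2^{-k}$. Splitting $\mathbb{E}[\rho_{\min}^{-T}]$ over the dyadic windows $\{2k\tau < T \leq 2(k+1)\tau\}$ reduces the estimate to a geometric series with ratio $\rho_{\min}^{-2\tau}/2$, which in the regime $\rho_{\min}^{-2\tau} < 2$ sums to a quantity comparable to $\rho_{\min}^{-2\tau}$, matching the exponent in the theorem. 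The principal difficulty is thus turning the first-moment bound of Theorem 1 into a clean exponential moment: outside the regime where the geometric series converges, one would likely need to replace the walk argument with a potential-theoretic super-solution $\phi(v) = \rho_{\min}^{-h(v)}$, where $h$ is a barrier built from (a smoothed version of) the torsion function $\mathbb{E}_v[T]$ and chosen so that $(L - \lambda_2)\phi \geq 0$ on $V \setminus \partial G$, from which a maximum principle applied to $f/\phi$ would recover the same inequality.
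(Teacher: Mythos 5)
Your martingale $M_k = f(v_k)\prod_{i<k}\rho_{v_i}^{-1}$ and the identity $\mathbb{E}[f(v_{k+1})\mid v_k]=\rho_{v_k}f(v_k)$ for the absorbed walk are exactly the right computations, and you have correctly diagnosed where your route runs into trouble: stopping at the random hitting time $T$ forces you to control $\mathbb{E}[\rho_{\min}^{-T}]$, but Theorem 1 together with Markov's inequality only gives $\mathbb{P}(T>2k\tau)\le 2^{-k}$, so your geometric series converges only in the regime $\rho_{\min}^{-2\tau}<2$, and even there produces an extra factor $(1-\rho_{\min}^{-2\tau}/2)^{-1}$ beyond the theorem's bound. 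In the complementary regime $\mathbb{E}[\rho_{\min}^{-T}]$ can genuinely be infinite, which also undermines the uniform integrability you would need to legitimize optional stopping at $T$. The barrier/super-solution alternative is only gestured at, not carried out.

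The paper's proof avoids the exponential moment entirely by stopping at a \emph{deterministic} time $k=2\max_v\deg(v)\cdot\mathrm{diam}(G)^2$ rather than the random time $T$. Iterating $\mathbb{E}[f(v_{j+1})]\ge(1-q)\mathbb{E}[f(v_j)]$ gives $(1-q)^k f(v_0)\le\mathbb{E}[f(v_k)]$ for that fixed $k$; then $\mathbb{E}[f(v_k)]$ is split into the absorbed part, bounded by $\max_{\partial G}f$, and the not-yet-absorbed part, bounded by $f(v_0)=\max_{V\setminus\partial G}f$, and Theorem 1 plus Markov's inequality guarantees that the absorption probability at time $k$ is at least $1/2$. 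Because the number of $\rho$-factors is deterministically $k$, the worst-case product is exactly $\rho_{\min}^{-k}$ and no moment-generating function is required. That is the ingredient missing from your write-up; once the stopping time is made deterministic, your martingale computation collapses to the paper's one-line iteration. (Both arguments quietly rely on sign information: since $\sum_v f(v)=0$, $f$ changes sign, so steps such as $\mathbb{E}[\rho_{v_j}f(v_j)]\ge(1-q)\mathbb{E}[f(v_j)]$ or $f(v_T)\le\max_{\partial G}f$ require the negative-mass terms to be handled carefully; this is a shared issue, not a difference between the two routes.)
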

This result is not as universal as the continuous analogue. It is uniformly bounded for a large family of path-like graphs for which $\lambda_2 \lesssim 1/\mbox{diam}(G)^2$. It would be desirable to have better upper bounds and a better understanding of how the constant in Theorem 5 has to depend on the graph. It is an interesting question whether the discrete Hot Spots problem with $\partial G$ as the notion of boundary can be useful in obtaining some robust arguments that can be used in the Euclidean setting; this seems potentially very difficult, however, at least numerical experimentation is much simpler (linear algebra as opposed to high-precision numerical solutions of a partial differential equation).

\subsection{Bj\"orck's Theorem.} 
We conclude with a classic 1956 result of Bj\"orck \cite{bjorck}.  If $\Omega \subset \mathbb{R}^d$ is a compact set, what can be said about the probability measure $\mu$ supported on $\Omega$ for which
$$ \int_{\Omega} \int_{\Omega} \|x-y\|^{\alpha} d \mu(x) d \mu(y) \rightarrow \max.$$

\begin{thm}[Bj\"orck, 1956] For any $\alpha \geq 1$, the support of the extremal measure $\mu$ is contained in the boundary, $ \emph{supp}(\mu) \subseteq \partial \Omega$.
\end{thm}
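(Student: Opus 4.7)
The plan is to apply the classical convex potential theory approach: realize the energy as an average of a convex potential and exploit strict convexity (or, where that fails, the distributional Laplacian). Fix a maximizer $\mu^*$, whose existence follows from weak-$*$ compactness of probability measures on the compact set $\Omega$ together with continuity of the energy functional, and set
$$U(x) = \int_\Omega |x-y|^\alpha \, d\mu^*(y), \qquad M := I(\mu^*) = \int_\Omega U(y) \, d\mu^*(y).$$
The two structural ingredients I would rely on are: (i) a first-variation argument with the perturbation $\mu_t = (1-t)\mu^* + t \delta_x$ for $x \in \Omega$, which yields $U(x) \leq M$ on $\Omega$ and, combined with continuity at points of positive $\mu^*$-mass, $U \equiv M$ on $\operatorname{supp}(\mu^*)$; (ii) convexity of $U$ on all of $\mathbb{R}^d$, which holds because $|x-y|^\alpha$ is the composition of the convex map $x \mapsto |x-y|$ with the nondecreasing convex map $r \mapsto r^\alpha$ on $[0,\infty)$ (this is precisely where $\alpha \geq 1$ enters), and convexity is preserved under averaging in $y$.

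Now suppose for contradiction that some $x_0 \in \operatorname{supp}(\mu^*)$ lies in the interior of $\Omega$, and pick $r > 0$ with $B(x_0, r) \subset \Omega$. On this ball $U$ is convex, bounded above by $M$, and attains $M$ at the center; restricted to any line through $x_0$ it becomes a one-variable convex function with an interior maximum, which must therefore be constant on the intersection with $B(x_0, r)$. Sweeping over all directions yields $U \equiv M$ on the whole ball $B(x_0, r)$. The remaining task is to show that this identity is incompatible with $x_0$ lying in $\operatorname{supp}(\mu^*)$.

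For $\alpha > 1$ the function $r \mapsto r^\alpha$ is strictly convex on $[0,\infty)$, and combining this with the triangle inequality shows that $x \mapsto |x-y|^\alpha$ is strictly convex in $x$ for every $y$; averaging against $\mu^*$ therefore makes $U$ strictly convex, which rules out constancy on any nondegenerate segment, let alone on a ball. The borderline case $\alpha = 1$ is the \emph{main obstacle}, because $|x-y|$ is only convex and in fact affine along rays emanating from $y$, so the strict-convexity route is unavailable. I would instead pass to the distributional Laplacian: $\Delta_x |x-y|$ equals $(d-1)/|x-y|$ for $d \geq 2$ (a strictly positive locally integrable function) and $2 \delta_y$ for $d = 1$. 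Integrating against $\mu^*$ makes $\Delta U$ a nonnegative distribution that is nontrivial on $B(x_0, r)$ -- pointwise strictly positive for $d \geq 2$, and equal to $2 \mu^*$, which charges $B(x_0, r)$ whenever $x_0 \in \operatorname{supp}(\mu^*)$, for $d = 1$ -- contradicting the vanishing of $\Delta U$ forced by constancy. Hence no interior point of $\Omega$ can belong to $\operatorname{supp}(\mu^*)$, which is the claim.
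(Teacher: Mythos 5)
The paper does not supply a proof of Bj\"orck's theorem; it only cites \cite{bjorck} for this classical fact and proves a graph analogue (Theorem 6). Your argument is correct and is essentially the classical potential-theoretic proof: the linear first variation $\mu_t = (1-t)\mu^* + t\delta_x$ gives $U \le M$ on $\Omega$ and $U = M$ on $\operatorname{supp}(\mu^*)$, convexity of $U$ (this is where $\alpha \ge 1$ is used) forces $U$ to be locally constant near any interior support point, and then strict convexity (for $\alpha > 1$) or positivity of the distributional Laplacian $\Delta U$ (for $\alpha = 1$) yields the contradiction. It is worth noting that the paper's proof of the discrete analogue runs along a genuinely different track: it never introduces the potential $U$, but instead directly moves all the mass at an interior vertex $a$ uniformly onto the neighbors of $a$ and verifies by a term-by-term expansion that the energy does not decrease, using the convexity of $f$ together with the structural property of interior vertices (at least as many neighbors farther from any witness as nearer). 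That mass-spreading perturbation is the combinatorial counterpart of smearing an atom over a small sphere, i.e., the subharmonicity route; your argument replaces it by convexity of $U$, which is adequate here because the stated theorem restricts to $\alpha \ge 1$, but would not by itself reach Bj\"orck's sharper claim (recorded in the paper) that $\alpha > 0$ suffices when $d \ge 2$ --- for that one needs subharmonicity of $|x-y|^{\alpha}$, which holds already for $\alpha > 2 - d$ and is a strictly weaker property than convexity.
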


Bj\"orck shows that if $\Omega \subset \mathbb{R}^d$ with $d \geq 2$, then $\alpha > 0$ is sufficient. $\alpha \geq 1$ is needed when $d=1$. Bj\"orck's result is naturally related to certain embedding problems and has inspired subsequent work in distance geometry \cite{alex, alex3, carando, gross, hinrichs, schon1, schon2, wolf2}.
We prove an analogue of Bj\"orck's result for general graphs.

\begin{theorem} If $f:\mathbb{R}_{\geq 0} \rightarrow \mathbb{R}$ is a strictly convex and monotonically increasing function, then any extremal probability measure $\mu$ on the vertices maximizing
$$ \sum_{v,w \in V} f(d(v,w)) \mu(v) \mu(w) \rightarrow \max$$
is supported in the boundary, $\emph{supp} (\mu) \subseteq \partial G$.
\end{theorem}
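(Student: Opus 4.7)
The plan is to run a first-variation argument. Suppose $\mu$ is an extremal probability measure that nevertheless places mass on some interior vertex $v^{*} \in V \setminus \partial G$, and I will produce a small perturbation of $\mu$ with strictly larger energy. The definition of $\partial G$ identifies the right perturbation: since $v^{*} \notin \partial G$ means the \emph{average} neighbor of $v^{*}$ is at least as far from every $w \in V$ as $v^{*}$ itself, transferring mass from $v^{*}$ to its neighbors should spread $\mu$ out in a way detected by the convex kernel $f(d(\cdot,\cdot))$. Concretely, let $\nu$ be the uniform probability measure on the neighbors of $v^{*}$ and, for $\epsilon \in [0,1]$, set
$$\mu_{\epsilon} = \mu + \epsilon\, \mu(v^{*})\bigl(\nu - \delta_{v^{*}}\bigr),$$
which is a probability measure for all such $\epsilon$. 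Writing $E(\mu) = \sum_{v,w} f(d(v,w))\mu(v)\mu(w)$ and expanding the quadratic form, the linear-in-$\epsilon$ term is
$$2\epsilon\, \mu(v^{*}) \sum_{w \in V} \mu(w) \left[ \frac{1}{\deg(v^{*})} \sum_{(v^{*},x) \in E} f(d(x,w)) - f(d(v^{*},w)) \right].$$

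The entire proof reduces to showing that the bracketed sum is strictly positive, which contradicts extremality once $\epsilon$ is taken small enough to swallow the $O(\epsilon^{2})$ correction. For each fixed $w$, Jensen's inequality applied to the convex function $f$ gives
$$\frac{1}{\deg(v^{*})} \sum_{(v^{*},x) \in E} f(d(x,w)) \;\geq\; f\!\left(\frac{1}{\deg(v^{*})} \sum_{(v^{*},x) \in E} d(x,w)\right) \;\geq\; f(d(v^{*},w)),$$
where the second inequality uses monotonicity of $f$ together with the defining property of $v^{*} \notin \partial G$. Hence every bracket is non-negative. For the particular choice $w = v^{*}$, one has $d(v^{*},v^{*}) = 0$ while $d(x,v^{*}) = 1$ for every neighbor $x$, so the bracket collapses to $f(1) - f(0)$; strict convexity combined with monotonicity forces $f$ to be \emph{strictly} increasing, so this contribution is strictly positive. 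Since $\mu(v^{*}) > 0$, the $w = v^{*}$ term contributes strictly, while all other terms are non-negative, giving the required strict inequality and hence the contradiction.

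The conceptual heart of the argument is the pairing of the Jensen-type condition defining $\partial G$ (an average of distances dominates the distance at the vertex) with a second Jensen inequality applied to the convex kernel $f$. The one genuinely subtle point, and the place where I expect the main obstacle to lie, is locating a source of \emph{strict} inequality robust enough to survive the $O(\epsilon^{2})$ error: strict convexity of $f$ alone is not enough, because the neighbors of $v^{*}$ might coincidentally have identical distances to a given $w$, and the definition of $\partial G$ on its own furnishes only a weak inequality. The self-term $w = v^{*}$ is the universally available source of strictness, and it is precisely here that the assumption $\mu(v^{*}) > 0$ re-enters the argument and closes the loop.
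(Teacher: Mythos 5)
Your proof is correct, and the high-level strategy---contradiction by transferring the mass at an interior vertex $a$ uniformly onto its neighbors---matches the paper's. The implementation is genuinely different and, I think, cleaner. The paper takes the full redistribution (your $\epsilon = 1$), expands $E(\nu) - E(\mu)$ into eight sums $\Sigma_1, \dots, \Sigma_8$ according to whether $v$ and $w$ lie in $\{a\}$, in $N(a)$, or in $V \setminus (N(a) \cup \{a\})$, and proves positivity of the key inner sum $\sum_{w \in N(a)} \bigl(f(d(v,w)) - f(d(v,a))\bigr)$ by partitioning $N(a)$ into $N_{d-1} \cup N_d \cup N_{d+1}$ with $d = d(v,a)$ and invoking $\# N_{d+1} \geq \# N_{d-1}$ together with strict positivity of the second difference $f(d+1) - 2f(d) + f(d-1)$. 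The paper's eventual source of strictness is the purely quadratic contribution $\sum_{v,w \in N(a)} (f(d(v,w)) - f(0)) \mu(a)^2 / \deg(a)^2$, positive because an interior vertex has degree at least two. You work instead with the first variation at $\epsilon = 0$, so only the linear term matters; the nonnegativity of each bracket follows in one line from Jensen's inequality, monotonicity of $f$, and the negated boundary condition, and strictness is read off from the single self-term $w = v^*$, which equals $f(1) - f(0) > 0$ (correctly deduced from strict convexity plus monotonicity). What you gain is that the entire estimate runs through Jensen rather than the explicit three-set distance partition, and the $O(\epsilon^2)$ error is swallowed harmlessly. What the paper's $\epsilon = 1$ computation buys is the remark after the theorem about the affine case $f(t)=t$: since all the paper's inequalities are kept weak, it shows that even for merely convex $f$ the energy does not drop under full redistribution, so every extremizer admits a boundary-supported extremizer of equal energy; your first-variation argument only certifies a nonnegative derivative and would need additional control on the quadratic term $\langle \nu - \delta_{v^*}, \nu - \delta_{v^*}\rangle_f$ to reach that borderline conclusion.
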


This covers the case $d(x,y)^{\alpha}$ for $\alpha > 1$. The case $\alpha = 1$, corresponding to the energy $\sum_{v,w} d(v,w) \mu(v) \mu(w)$ is more subtle: our argument shows that for any extremizer there exists an extremizer fully supported in the boundary $\partial G$ attaining the same value. This case is known to give rise to fairly sophisticated geometry in the graph case, we refer to \cite{stein2, stein3, thomassen}.

\section{Some preparatory statements}
We start with two easy statements.  They will be used, implicitly or explicitly, in all subsequent arguments. We first recall the definition of a boundary vertex: a vertex $v \in V$ is said to be in the boundary, $v \in \partial G$, if there exists another vertex $w \in V$ such that a randomly chosen neighbor of $v$ is closer to $w$ than $v$ itself. It may help to think of $v$ as a \textbf{v}ertex and of $w$ as a \textbf{w}itness (that witnesses the property of $v \in \partial G$).  Another way of thinking about the definition of $\partial G$ is as follows:  we think of $w$ as a fixed vertex and partition all the remaining vertices of $V$ by their distance from $w$, meaning
$$ V = \left\{w\right\} \cup \bigcup_{i=1}^{\mbox{\tiny diam}(G)}A_i \qquad \mbox{where} \qquad A_i = \left\{v \in V: d(v,w) = i \right\}.$$ 
One advantage of this ordering is that vertices in $A_i$ can only be connected to vertices in $A_{i-1}$ and $A_{i+1}$, we induce a type of foliation. Let us now suppose that $v \in A_i$.  The vertex $v$ has to have at least one neighbor in $A_{i-1}$ because of the graph $G$ is connected and the shortest path from $w$ to $v$ has to pass through $A_{i-1}$. The vertex $v$ may or may not have other neighbors in $A_{i-1}, A_i$ and $A_{i+1}$.
\begin{proposition}
The vertex $v \in V$ is witnessed by $w \in V$ to be in the boundary if and only if $v$ has more neighbors in $A_{i-1}$ than in $A_{i+1}$.
\end{proposition}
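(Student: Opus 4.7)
The plan is a direct calculation using the definition of $\partial G$, exploiting the foliation by level sets of $d(\cdot,w)$.

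First I would observe that along any edge, the distance to $w$ can change by at most $1$: if $(v,x)\in E$, then $|d(v,w)-d(x,w)|\leq 1$. Consequently, if $v\in A_i$, then every neighbor $x$ of $v$ lies in $A_{i-1}\cup A_i\cup A_{i+1}$. Let $a$, $b$, $c$ denote the number of neighbors of $v$ in $A_{i-1}$, $A_i$, $A_{i+1}$ respectively, so that $a+b+c=\deg(v)$.

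Next I would compute the averaged distance explicitly:
$$\frac{1}{\deg(v)}\sum_{(v,x)\in E} d(x,w) = \frac{a(i-1)+bi+c(i+1)}{\deg(v)} = i - \frac{a-c}{\deg(v)}.$$
Since $d(v,w)=i$, the inequality defining the witness property,
$$\frac{1}{\deg(v)}\sum_{(v,x)\in E} d(x,w) < d(v,w),$$
reduces to $-\,(a-c)/\deg(v) < 0$, that is, $a > c$. This is precisely the statement that $v$ has strictly more neighbors in $A_{i-1}$ than in $A_{i+1}$, giving both directions of the equivalence simultaneously.

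There is no real obstacle here: once one records that neighbors can only belong to the three adjacent level sets $A_{i-1}, A_i, A_{i+1}$, the computation is a one-liner and the contribution from the $A_i$ neighbors cancels out of the inequality. The only point worth emphasizing in the write-up is why neighbors cannot skip level sets, which follows immediately from the triangle inequality for the graph metric and the assumption that $G$ is connected (so that $d(v,w)$ is finite for all $v,w$).
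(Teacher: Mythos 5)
Your proof is correct and takes essentially the same approach as the paper: partition the neighbors of $v$ into the adjacent level sets $A_{i-1},A_i,A_{i+1}$, expand the average distance, and observe that the $A_i$-contribution cancels so the inequality collapses to $a>c$. The only difference is cosmetic notation ($a,b,c$ versus explicit cardinalities).
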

\begin{proof} Abbreviate $d(v,w) = i$ and write $N(v)$ for the set of vertices that are adjacent to $v$.  Then all the neighbors of $v$, meaning vertices $x \in V$ such that $(v,x) \in E$, are distance $i-1, i$ or $i+1$ from $w$.  
By assumption, $v$ is witnessed by $w$ if 
\begin{align*}
  i =  d(v,w) &> \frac{1}{\deg(v)} \sum_{(v,x) \in E} d(x, w) \\
  &= \frac{i-1}{\deg(v)} \#(A_{i-1} \cap N(v))+  \frac{i}{\deg(v)} \# (A_i \cap N(v)) \\
  &+ \frac{i+1}{\deg(v)} \# ( A_{i+1} \cap N(v)).
  \end{align*}
The cardinality of the three sets sums up to $\deg(v)$, this can be rearranged as
$$ 0 > - \#(A_{i-1} \cap N(v)) +  \# ( A_{i+1} \cap N(v))$$
which is the desired statement.
\end{proof}

\begin{center}
\begin{figure}[h!]
\begin{tikzpicture}[scale=2]
\filldraw (-0.5,0) circle (0.03cm);
\draw [thick, dashed] (-0.5, 0) -- (-0.2, 0.4);
\draw [thick, dashed] (-0.5, 0) -- (-0.2, 0.2);
\draw [thick, dashed] (-0.5, 0) -- (-0.2, 0.0);
\draw [thick, dashed] (-0.5, 0) -- (-0.2, -0.3);
\node at (-0.6, -0.1) {$w$};
\draw[ultra thick] (1,0) ellipse (0.3cm and 0.6cm);
\draw[ultra thick] (2,0) ellipse (0.3cm and 0.6cm);
\draw[ultra thick] (3,0) ellipse (0.3cm and 0.6cm);
\node at (1, -0.85) {$A_{i-1}$};
\node at (2, -0.85) {$A_i$};
\node at (3, -0.85) {$A_{i+1}$};
\filldraw (2, 0.4) circle (0.03cm);
\filldraw (2, -0.15) circle (0.03cm);
\filldraw (2, -0.4) circle (0.03cm);
\draw [thick] (1, 0.5) -- (2, 0.4) -- (3, 0.5);
\draw [thick] (1, 0.3) -- (2, 0.4) -- (3, 0.3);
\draw [thick] (1, 0.2) -- (2, 0.15);
\draw [thick] (1, 0.15) -- (2, 0.15);
\draw [thick] (1, 0.1) -- (2, 0.15);
\draw [thick] (1, -0.2) -- (2, -0.15);
\draw [thick] (1, -0.15) -- (2, -0.15);
\draw [thick] (1, -0.1) -- (2, -0.15) -- (3, -0.15);
\draw [thick] (1, -0.5) -- (2, -0.4) -- (3, -0.5);
\draw [thick] (2, -0.4) -- (3, -0.4);
\draw [thick] (2, 0.15) -- (2,0.4);
\filldraw[red] (2, -0.15) circle (0.03cm);
\filldraw[red] (2, 0.15) circle (0.03cm);
\end{tikzpicture}
\vspace{-10pt}
\caption{Vertices at distance $i-1, i$ and  $i+1$ from $w$. Two vertices (red) are being identified by $w$ as being boundary vertices.}
\end{figure}
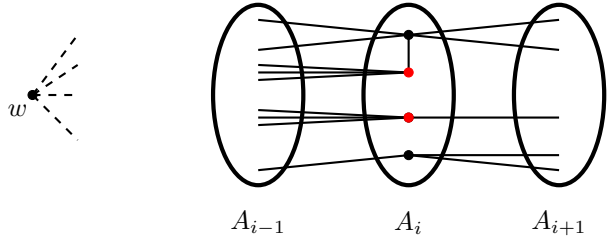
\end{center}

We note that a vertex $v \in V$ is in the boundary if it is witnessed as being in the boundary by one other vertex $w \in V$.  In particular, if a vertex $v \in V$ is \textit{not} in the boundary, then this gives us information about every other vertex $w \in V$. Negating Proposition 1, we arrive at the equally useful Proposition 2.

\begin{proposition}
A vertex $v$ is in the interior, $v \in V \setminus \partial G$, if and only if, for every other vertex $w$, the number of neighbors of $v$ that are further away from $w$ is at least as large as the number of neighbors of $v$ that are closer to $w$.
\end{proposition}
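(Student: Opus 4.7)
The plan is to obtain Proposition 2 as a direct consequence of Proposition 1 by taking contrapositives and then reorganizing the quantifiers. By the very definition recalled just above, $v \in \partial G$ iff there exists at least one witness $w \in V$ for which the defining strict inequality holds; equivalently, $v \in V \setminus \partial G$ iff no vertex $w \in V$ witnesses $v$. So the task reduces to reformulating ``$w$ does not witness $v$'' in the language of counting neighbors.

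The first step is to fix an arbitrary $w \in V$ and, as in the set-up preceding Proposition 1, stratify $V$ by distance from $w$ via the layers $A_j = \{u \in V : d(u,w) = j\}$. Setting $i = d(v,w)$ and $N(v)$ for the neighbors of $v$, a neighbor $x$ of $v$ satisfies $|d(x,w) - d(v,w)| \leq 1$, hence $x \in A_{i-1} \cup A_i \cup A_{i+1}$. Under this dictionary, ``neighbors closer to $w$ than $v$'' means $N(v) \cap A_{i-1}$ and ``neighbors farther from $w$ than $v$'' means $N(v) \cap A_{i+1}$; neighbors in $A_i$ are equidistant and contribute to neither count.

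Next I would invoke Proposition 1, which asserts that $w$ witnesses $v$ precisely when $\#(N(v) \cap A_{i-1}) > \#(N(v) \cap A_{i+1})$. Negating this single inequality yields: $w$ fails to witness $v$ iff $\#(N(v) \cap A_{i-1}) \leq \#(N(v) \cap A_{i+1})$, i.e.\ the number of neighbors of $v$ closer to $w$ is at most the number of neighbors farther from $w$. Finally, quantifying over all choices of $w \in V$ gives exactly the claimed characterization of $v \in V \setminus \partial G$. (The degenerate cases $w = v$ and $v$ at maximal distance from $w$ need no separate treatment: in the former, no neighbor of $v$ can be closer to $w$ than $v$, so the inequality holds trivially, and in the latter it is already covered by Proposition 1.)

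Since Proposition 1 does all the combinatorial work, there is no genuine obstacle here; the only point requiring care is the logical bookkeeping, namely that the correct negation of the strict inequality ``more closer than farther'' is the non-strict inequality ``at least as many farther as closer,'' and that the existential quantifier defining $\partial G$ becomes the universal quantifier defining $V \setminus \partial G$.
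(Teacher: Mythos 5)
Your proposal is correct and matches the paper's own route: the paper derives Proposition 2 simply by negating Proposition 1 (turning the existential witness condition into a universal one and the strict inequality into its non-strict reverse), which is exactly your argument. The extra care you take with the degenerate cases $w=v$ and the distance-layer bookkeeping is fine but adds nothing beyond what the paper's one-line negation already covers.
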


\section{Proof of Theorem 1}
The proof of Theorem 1 decouples into two parts: a combinatorial reduction that encapsulates the main idea and some routine computations in probability theory. We discuss these separately.

\subsection{The Idea.} Suppose $v \in V$ is an arbitrary vertex and we start a random walk in $x_0 = v$ where $x_{k+1}$ is a randomly chosen neighbor of $x_k \in V$. The goal is to show that we will eventually hit a vertex $v_k \in \partial G$ that is in the boundary and to quantify how long that will take and how unlikely it is that it will take a very long time.  The main idea behind the proof is to keep track of the random sequence of integers $(a_k)_{k=1}^{\infty}$ where $a_k = d(v,v_k)$. We may think of  $(a_k)_{k=1}^{\infty}$ as a random walk on the first few integers, more precisely on
$$ \mbox{the state space} \quad \left\{0,1,2,\dots, \max_{w \in V} d(v,w)\right\} \subseteq \left\{0,1,2,\dots,  \mbox{diam}(G)\right\}.$$
We note that $a_{k+1} - a_k \in \left\{-1,0,1\right\}$ and the precise transition probabilities depend on the vertex $v_k$. However, as long as $v_k \notin \partial G$, Proposition 2 implies that
\begin{equation} \label{eq:drift}
\mathbb{P}\left( a_{k+1} = a_k + 1\right) \geq \mathbb{P}\left( a_{k+1} = a_k - 1\right).
\end{equation}

Moreover, because the graph $G$ is connected, there is a shortest path from $v$ to $v_k$, so at least one of the neighbors of $v_k$ is closer to $v$. Since there is a uniform upper bound $\deg(v_k) \leq \max_{v \in V} \deg(v)$, the likelihood of the random walk decreasing its distance to the origin is always at least
$$  \mathbb{P}\left( a_{k+1} = a_k - 1\right) \geq \frac{1}{\max_{v \in V} \deg(v)}.$$

\begin{center}
\begin{figure}[h!]
\begin{tikzpicture}
\filldraw (0.5,0) circle (0.06cm);
   \draw [black,thick,domain=330:360] plot ({cos(\x)}, {sin(\x)});
   \draw [black,thick,domain=0:30] plot ({cos(\x)}, {sin(\x)});
      \draw [black,thick,domain=335:360] plot ({2*cos(\x)}, {2*sin(\x)});
   \draw [black,thick,domain=0:25] plot ({2*cos(\x)}, {2*sin(\x)});
         \draw [black,thick,domain=340:360] plot ({3*cos(\x)}, {3*sin(\x)});
   \draw [black,thick,domain=0:20] plot ({3*cos(\x)}, {3*sin(\x)});
         \draw [black,thick,domain=345:360] plot ({4*cos(\x)}, {4*sin(\x)});
   \draw [black,thick,domain=0:15] plot ({4*cos(\x)}, {4*sin(\x)});
\filldraw (3.4,0.2) circle (0.06cm);
\filldraw (3.8,-0.5) circle (0.06cm);
\filldraw (2.4,-0.4) circle (0.06cm);
\filldraw (2.7,0.4) circle (0.06cm);
\filldraw (1.3,0) circle (0.06cm);
   \node at (0.5, -0.3) {$v$};
   \draw [thick, ->]  (2.4, -0.4) -- (1.45, -0.1);
   \draw [thick, ->] (1.3, 0) -- (2.6, 0.38);
   \draw [thick, ->] (2.7, 0.4) -- (3.3, 0.2);
   \draw [thick, ->] (3.4, 0.2) -- (3.75, -0.4);
\end{tikzpicture}
\caption{A random walk on the graph where we only keep track of the distance to $v \in V$ introduces a random walk on the integers.}
\end{figure}
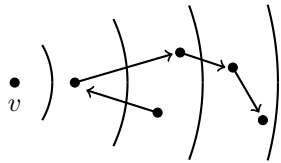
\end{center}

We can now introduce a second type of random variable, the sequence $(b_{\ell})_{\ell=1}^{\infty}$ which records all the values attained by the sequence $a_k$ in the correct order with consecutive repetitions removed. Phrased differently, the sequence $(b_{\ell})_{\ell=1}^{\infty}$ encodes all the values attained by $a_k = d(v,v_k)$ in the correct order but does not keep track of how long $a_k$ may stay the same distance. This implies, by construction, that $b_{\ell+1} = b_{\ell} \pm 1$ is a classical random walk on the integers in the sense that we move either left or right (though not necessarily with the same likelihood). As long as the underlying vertex $v_k \notin \partial G$, we have
\begin{align} \label{tokyo:drift}
 \mathbb{P}\left( b_{\ell+1} = b_{\ell} + 1\right) \geq \mathbb{P}\left( b_{\ell+1} = b_{\ell} - 1\right).
 \end{align}
 Note also that if $b_{\ell} = 0$, then $v_k = v$ and $b_{\ell+1} = 1$. The question is now whether we can bound the first time that $v_k \in \partial G$. We know that this is surely the case once $a_k =  \max_{w \in V} d(v,w)$ but it may happen sooner than that.
 The main idea is now as follows: as long as $v_k \notin \partial G$ is not part of the boundary, the random walk $a_k$ behaves like a random walk on $\left\{0,1,2,\dots, \max_{w \in V} d(v,w)\right\}$. The random walk is not necessarily unbiased but if it has a drift, then \eqref{eq:drift} and \eqref{tokyo:drift} ensure that this drift can only be to the right. Such a random walk will, at some point, attain large values and, in particular, a value larger than $\mbox{diam}(G)$. This is impossible. We can thus bound the expected time it takes for $v_k$ to hit $\partial G$ by the expected time it takes for $(a_k)_{k=1}^{\infty}$ to exceed $\mbox{diam}(G)$ (which, as long as it never hits a boundary vertex, it is eventually going to do).  This, expected hitting time we can bound in terms of the expected hitting time for $(b_k)_{k=1}^{\infty}$ up to a scaling factor that will concentrate.
 
 \subsection{Probabilistic Lemmas.} We will now compute the expected behavior of these objects.  These are all fairly classical problems in probability theory. The focus is on a simple and self-contained exposition (at the cost of slightly suboptimal constants). Suppose we
 are given a random walk on  $\left\{0,1,2,\dots, \max_{w \in V} d(v,w)\right\}$ where $0$ is repelling and the largest number is absorbing. We may assume, by stochastic domination (equation \eqref{tokyo:drift}) that the random walk is unbiased.
 By symmetry, we may think of this as an unbiased random walk on 
 $$\left\{-\max_{w \in V} d(v,w), \dots, -1, 0,1,\dots, \max_{w \in V} d(v,w)\right\}$$
  with absorbing endpoints. This, in turn, is dominated by working on the larger interval $\left\{-\mbox{diam}(G), \dots, -1, 0,1,\dots, \mbox{diam}(G)\right\}$. We can now deduce an elementary and well-known Lemma ensuring that the random walk exits quickly.  In terms of the big picture, this ensures that the random walk $b_k$ is likely to exit within a controlled amount of time.
 
\begin{lemma}
Consider a random walk $\left\{-\emph{diam}(G), \dots, -1, 0,1,\dots, \emph{diam}(G)\right\}$ starting in $x_0$. The first hitting time $T$ of $\pm \emph{diam}(G)$ satisfies
$$ \mathbb{E}~T \leq \emph{diam}(G)^2.$$
Moreover, for any $k \geq 4 \cdot \emph{diam}(G)^2$, we have
 $$ \mathbb{P}\left( T \geq k \right) \leq 4 \cdot 2^{ - k/(2\emph{diam}(G)^2)}.$$
\end{lemma}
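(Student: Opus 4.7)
Write $D = \mathrm{diam}(G)$ throughout. The key observation is that on the symmetric state space $\{-D,\ldots,D\}$, the process $M_k = x_k^2 - k$ is a martingale under the unbiased walk: $\mathbb{E}[x_{k+1}^2 \mid x_k] = \tfrac{1}{2}((x_k+1)^2 + (x_k-1)^2) = x_k^2 + 1$, so $\mathbb{E}[M_{k+1}\mid \mathcal{F}_k] = M_k$. A preliminary crude escape argument shows $\mathbb{E}[T]<\infty$: from any interior state the walk reaches $\pm D$ within the next $2D$ steps with probability at least $2^{-2D}$, so $T$ is stochastically dominated by $2D$ times a geometric random variable. The optional stopping theorem then applies to $M_{T \wedge n}$; letting $n \to \infty$, with dominated convergence on $x_{T \wedge n}^2 \leq D^2$ and monotone convergence on $T \wedge n$, yields $\mathbb{E}[x_T^2] = x_0^2 + \mathbb{E}[T]$. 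Since $|x_T| = D$, this gives
\[ \mathbb{E}[T] = D^2 - x_0^2 \leq D^2, \]
which is the first claim.

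For the tail estimate I would bootstrap the expected-value bound via Markov's inequality and the strong Markov property. From any starting point, Markov gives $\mathbb{P}(T \geq 2D^2) \leq \mathbb{E}[T]/(2D^2) \leq 1/2$. Conditional on $\{T \geq 2jD^2\}$, the walk sits at some interior state $y$; the same estimate applied to the walk restarted at $y$ gives
\[ \mathbb{P}\bigl(T \geq 2(j+1)D^2 \,\big|\, T \geq 2jD^2\bigr) \leq 1/2 \]
uniformly in $j$ and in $y$. Iterating yields $\mathbb{P}(T \geq 2jD^2) \leq 2^{-j}$. For $k \geq 4D^2$, choosing $j = \lfloor k/(2D^2)\rfloor \geq 2$ and unpacking the floor gives
\[ \mathbb{P}(T \geq k) \leq 2^{-j} \leq 2 \cdot 2^{-k/(2D^2)}, \]
which is comfortably within the factor $4$ claimed in the statement.

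The only step requiring any care is the justification of optional stopping, for which the cheap a priori bound $\mathbb{E}[T] < \infty$ above is enough; everything else is a routine translation between expected hitting times of a symmetric walk, Markov's inequality, and geometric iteration via the strong Markov property. There is no real obstacle — the content of the lemma is carried entirely by the martingale identity $\mathbb{E}[x_{k+1}^2\mid x_k] = x_k^2 + 1$ and the memorylessness of the walk.
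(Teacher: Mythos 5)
Your proof is correct and follows essentially the same path as the paper, with one cosmetic difference in the first half. You obtain the expectation bound from the martingale $M_k = x_k^2 - k$ and optional stopping (together with a crude a priori finiteness argument to justify passing to the limit), whereas the paper simply writes down the difference equation $T(x) = \tfrac{1}{2}(T(x-1)+T(x+1)) + 1$ with boundary condition $T(\pm D) = 0$ and solves it explicitly to get $T(x) = D^2 - x^2$. These are two standard renderings of the same computation — the difference equation is precisely what the martingale identity encodes — and both yield $\mathbb{E}[T] = D^2 - x_0^2$. The paper's version is slightly shorter since it does not need to verify the hypotheses of optional stopping. The tail estimate is derived identically in both: Markov's inequality gives $\mathbb{P}(T \geq 2D^2) \leq 1/2$, and the (strong) Markov property iterates this geometrically. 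Your floor bookkeeping at the end actually produces the sharper constant $2$ where the paper is content to state $4$, but this is harmless slack rather than a discrepancy.
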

\begin{proof} We consider the expected hitting time when starting in $x_0$ as a function $T(x_0)$. Then $T$ vanishes at the endpoints
and satisfies the equation
$$ T(x) =   \frac{T(x-1) + T(x+1)}{2} + 1$$
everywhere else. This can be solved explicitly and is given by the explicit function $ T(x) = \mbox{diam}(G)^2 - x^2.$
The maximum is attained in $T(0) = \mbox{diam}(G)^2$. This argument can now be bootstrapped. By Markov's inequality, we have
 $$ \mathbb{P}\left(T \geq 2 \cdot \mbox{diam}(G)^2\right) \leq \frac{1}{2}.$$
 Using the Markov property, we deduce, for any $\ell \in \mathbb{N}_{\geq 1}$ that
  $$ \mathbb{P}\left(T \geq 2\ell \cdot \mbox{diam}(G)^2\right) \leq \frac{1}{2^{\ell}}.$$
This implies the result.
 \end{proof}

 \subsection{Proof of Theorem 1}
 \begin{proof}
 The previous section provides good control on the random walk $(b_{\ell})_{\ell=1}^{\infty}$. However, we need control on $(a_k)_{k=1}^{\infty}$ since
 that is the random walk that actually corresponds to the random walk on the graph. The main difference is that we 
 have $b_{\ell+1} = b_{\ell} \pm 1$ where as $a_{k+1} = a_k$ is quite possible. However, $(a_k)_{k=1}^{\infty}$ is unlikely to be stationary for a very long time since
 $$ \mathbb{P}\left( a_{k+1} = a_k + 1\right) \geq \mathbb{P}\left( a_{k+1} = a_k - 1\right) \geq \frac{1}{\max_{v \in V} \deg(v)}.$$ 
 Each step of the random walk either increases or decreases its distance with likelihood at least 
$$ \frac{2}{\deg(v_k)} \geq \frac{2}{\max_{v \in V} \deg(v)}.$$
The relevant question is now: if we have to make $T$ jumps of the random walk $(b_{\ell})_{\ell=1}^{\infty}$ to ensure an escape, how many steps of $(a_k)_{k=1}^{\infty}$ are required to have at least $T$ jumps in $b_{\ell}$? If we use the random variable $Y$ to denote the number of jumps needed to either increase or decrease the distance from $v$, then the Markovian property of the random walk ensures that
$$ \mathbb{P}\left( Y \geq k \right) \leq \left(1 - \frac{2}{\max_{v \in V} \deg(v)}\right)^{k-1}.$$
This implies
$$ \mathbb{E}Y = \sum_{k=1}^{\infty} \mathbb{P}(Y \geq k) \leq \sum_{k=1}^{\infty} \left(1 - \frac{2}{\max_{v \in V} \deg(v)}\right)^{k-1} = \frac{\max_{v \in V} \deg(v)}{2}.$$
This shows that we expect to need typically no more than $\sim \max_{v \in V} \deg(v)/2$ steps to see a change in the distance. If we want to see $T$ changes in the distance, this can typically be accomplished in typically $T \max_{v \in V} \deg(v)/2$.  Wald's identity \cite{wald1, wald2} states that if we have a random sum of identical random variables
$$ Z = X_1 + \dots + X_N \qquad \mbox{with}~N~\mbox{a random variable},$$
then $\mathbb{E}Z = (\mathbb{E} X)( \mathbb{E} N)$.  In our case this shows that
$$ \mbox{E}\left[\mbox{number of steps}\right] \leq \left(\mathbb{E}T \right) \left( \mathbb{E}Y \right) \leq \max_{v \in V} \deg(v) \cdot \mbox{diam}(G)^2.$$
\end{proof}

We observe that the entire argument can be translated, almost verbatim, into the continuous setting.  If we consider Brownian motion $\omega(t)$ started in a point $x_0 \in \Omega \subset \mathbb{R}^d$, then the random variable keeping track of the distance to $x_0$
$$ X_t = \| \omega(t) - x_0\|_{\ell^2(\mathbb{R}^d)}$$
is a Bessel process of order $d$ satisfying the SDE
$$ d X_t = \frac{d-1}{2} \frac{dt}{X_t} + dW_t,$$
where $W_t$ is one-dimensional Brownian motion. The Bessel process may be considered a one-dimensional random walk with a drift to the right. This drift, geometrically interpreted, is a consequence of the curvature of the sphere and, more precisely, the fact that slightly more than half the points in a small ball centered around $\omega(t)$ are further away from $x_0$ than $\omega(t)$ is (this is \textit{exactly} our definition of interior point in the graph setting). Continuing with the argument, we see that the likelihood of $X_t$ exceeding $\mbox{diam}(\Omega)$ is increasing quickly as soon as $t$ exceeds $\mbox{diam}(\Omega)^2$. This would give a probabilistic proof of P\'olya's result (though not necessarily of the inequality $\mathbb{E}T \leq c_d |\Omega|^{2/d}$ which requires symmetrization techniques). Arguments of this type have been used before in the continuous setting to study the torsion function \cite{hoskins} as well as Hermite-Hadamard inequalities \cite{lu, stein6}.

\section{Proof of Theorem 2}
 \begin{proof}
  Let $G=(V,E)$ be a finite, connected graph. We consider a minimizer of the variational problem
$$ \frac{\sum_{(u,v) \in E} (f(u) - f(v))^2}{ \sum_{v \in V} f(v)^2} \rightarrow \min \qquad \quad \mbox{subject to}~ f\big|_{\partial G} = 0.$$
 The existence of such a minimizer is obvious from compactness. These minimizers are actually easy to compute: if we drop the condition $f\big|_{\partial G} = 0$, then 
$$\sum_{(u,v) \in E} (f(u) - f(v))^2 = \left\langle f, L f \right\rangle,$$
where $L$ is the Graph Laplacian corresponding to $G$. The expression
$$ \frac{\sum_{(u,v) \in E} (f(u) - f(v))^2}{ \sum_{v \in V} f(v)^2} = \frac{\left\langle f, Lf \right\rangle}{\left\langle f, f \right\rangle}$$
is merely the Rayleigh-Ritz quotient and a minimizer is given by an eigenvector corresponding to the smallest eigenvalue: this eigenvalue is 0 and the minimizer is given by a constant function $f \equiv 1$. The first `nontrivial' solution is given by an eigenvector corresponding to the smallest positive eigenvalue (and, by orthogonality, this function will have mean value 0).
We are considering a different problem since we require that the function $f$ vanishes on $\partial G \subset V$ but not identically. It is not too difficult to see that this also corresponds to an eigenvalue for a different matrix. Let $L_2$ be the symmetric matrix that arises from taking $L$ and erasing all the rows and columns that correspond to a vertex in $\partial G$.  Alternatively, we could define $L_2$ as the Laplacian of the subgraph defined on $V \setminus \partial G$ after additionally adding $+1$ to each diagonal entry $(L_2)_{ii}$ for every neighbor of $v_i$ that is in the boundary. This can also be seen by writing the quadratic form as
\begin{align*}
\sum_{(u,v) \in E} (f(u) - f(v))^2  &= \sum_{(u,v) \in E \atop u,v \in V \setminus \partial G} (f(u) - f(v))^2 + \sum_{(u,v) \in E \atop u \in V \setminus \partial G, v \in \partial G} (f(u) - f(v))^2 \\
&= \sum_{(u,v) \in E \atop u,v \in V \setminus \partial G} (f(u) - f(v))^2 + \sum_{(u,v) \in E \atop u \in V \setminus \partial G, v \in \partial G} f(u)^2 \\
&= \sum_{(u,v) \in E \atop u,v \in V \setminus \partial G} (f(u) - f(v))^2 + \sum_{u \in V} f(u)^2 \# \left\{(u,v) \in E: v \in \partial G \right\}.
\end{align*}

Hence, any function $f:V \rightarrow \mathbb{R}$ vanishing on $\partial G$ satisfies
$$\sum_{(u,v) \in E} (f(u) - f(v))^2 = \left\langle f, L_2 f \right\rangle$$
and the minimizer has to be an eigenvector corresponding to the smallest eigenvalue. Moreover, the trivial inequality $||f(v)| - |f(w)|| \leq |f(v)-f(w)|$ allows us to assume without loss of generality that $f$ assumes only non-negative values.
It is easy to see that the smallest eigenvalue of $L_2$ has to be positive: the function is not allowed to vanish identically and thus it changes values across at least one edge. In summary,
$$ \min_{f:V \rightarrow \mathbb{R} \atop f |_{\partial G} = 0 } \frac{\sum_{(u,v) \in E} (f(u) - f(v))^2}{ \sum_{v \in V} f(v)^2} = \lambda_1(L_2) > 0.$$

We shall now fix $f$ to be a solution of $L_2 f = \lambda_1 (L_2) f$, an eigenvector corresponding to the smallest eigenvalue of $L_2$. We will extend it to a function on $V$ (as opposed to only $V \setminus \partial G$) by setting it to be 0 in $\partial G$. By checking a single row of the eigenvalue equation, we infer that the function $f$  satisfies, for all $v \in V \setminus \partial G$, 
\begin{align} \label{eq:imp}
\deg(v) f(v) - \sum_{(v,w) \in E} f(w) =  \lambda_1(L_2) f(v) 
\end{align}

We also note that we may assume, without loss of generality, that
$$ \lambda_1(L_2) < \min_{v \in V} \deg(v)$$
since the desired inequality is trivially satisfied otherwise.
 Let us fix an arbitrary $v \in V \setminus \partial G$. Equation \eqref{eq:imp} is equivalent to
$$   \left(1 - \frac{ \lambda_1(L_2)}{\deg(v)} \right) f(v) =  \frac{1}{\deg(v)}\sum_{(v,w) \in E} f(w).$$
This can be turned into an inequality: if $v \in V \setminus \partial G$, then
$$ \frac{1}{\deg(v)}\sum_{(v,w) \in E} f(w) \geq    \left(1 - \frac{ \lambda_1(L_2)}{\min_{v \in V} \deg(v)} \right) f(v).$$
Abbreviating $q =   \lambda_1(L_2)/\min_{v \in V} \deg(v)$ allows us to write that inequality even more concisely: if $v_k \in V \setminus \partial G$ is an arbitrary point that is not in the boundary and if $v_{k+1}$ is a randomly chosen neighbor, then the value of $f$ is typically not much smaller and
$$ \mathbb{E} f(v_{k+1}) \geq (1-q) f(v_k).$$
However, when phrased like that, there is a natural modification of the random walk that ensures that this inequality is always true.  Let us define a random walk on $V$ by choosing
$$ v_{k+1} = \begin{cases} \mbox{a random neighbor of}~v_k \qquad &\mbox{if}~v_k \in V \setminus \partial G \\ 
v_k \qquad &\mbox{if}~v_k \in \partial G. \end{cases}$$
Since $f$ vanishes on $\partial G$, we have the inequality $\mathbb{E} f(v_{k+1}) \geq (1-q) f(v_k)$ independently whether or not $v_k$ is in the boundary or not, it is now true everywhere. Let us now choose $v_0$ to be the vertex in which $f$ assumes its maximal value. We may then deduce the inequality
$$ (1 - q)^k f(v_0) \leq \mathbb{E} f(v_k).$$
Using $\mu_k$ to denote the probability distribution of the random walk started in $v_0$ after $k-$steps, the fact that $f$ vanishes in the boundary and that the initial vertex $v_0$ was chosen so as to maximize $f$, we have
\begin{align*}
 (1 - q)^k f(v_0)  \leq \mathbb{E} f(v_k)  &= \sum_{v \in V} f(v) \mu_k(v) = \sum_{v \in V \setminus \partial G} f(v) \mu_k(v) \\
 &\leq  \sum_{v \in V \setminus \partial G} f(v_0) \mu_k(v) = f(v_0) \cdot \mu_k\left( V \setminus \partial G \right).
\end{align*}
This, however, gives us a lower bound on the probability that a random walk started in $v_0$ will not hit the boundary within the first $k$ steps of its life
\begin{align*}
 \mu_k\left( V \setminus \partial G \right) \geq (1-q)^k.
 \end{align*}
However, using Lemma 1 and letting $k \rightarrow \infty$, we know that this likelihood decays exponentially and
$$ (1-q)^k \leq 4 \cdot (\max_{v \in V} \deg(v)) \cdot 2^{-k/(2\mbox{\tiny diam}(G)^2)}.$$
This implies that $ 2^{1/(2\mbox{\tiny diam}(G)^2)} \leq \frac{1}{1-q}$
from which we deduce
$$   \frac{1}{2}\frac{1}{\mbox{diam}(G)^2} \leq \log_2 \left( \frac{1}{1-q} \right).$$
If $q \geq 1/2$, then
$$  \frac{1}{2}  \frac{1}{\mbox{diam}(G)^2} \leq \frac{1}{2} \leq q \leq \frac{\lambda_1(L_2)}{\min_{v \in V} \deg(v)}$$
and the desired result holds. If $0 < q < 1/2$, then we can use an inequality for real numbers that is true in that range to deduce that
$$  \frac{1}{2}\frac{1}{\mbox{diam}(G)^2} \leq \log_2 \left( \frac{1}{1-q} \right) \leq 2q = 2\frac{\lambda_1(L_2)}{\min_{v \in V} \deg(v)} $$
and the desired inequality follows.
\end{proof}

  The argument is again philosophically related to existing arguments in the continuous setting \cite{georg, rachh} to prove a slightly different result that also has an analogue here: the inequality
  $$  \mu_k\left( V \setminus \partial G \right) \geq \left( 1- \frac{\lambda_1(L_2)}{\min_{v \in V} \deg(v)} \right)^k$$
  shows that a random walk started in the vertex in which $f$ is maximal has the property that a random walk is unlikely to immediately hit the boundary: for something like at least
  $$ k \sim \frac{\min_{v \in V} \deg(v)}{\lambda_1(L_2)} $$
  steps, the likelihood of hitting the boundary is $\leq 1/e \sim 37\%$. This means that there are vertices that are, in this sense, far away from the boundary in the sense of random walks.  In the continuous setting this implies that the domain $\Omega$ has to contain `most' of a ball of a certain size (small pieces of the ball can be missing), this was first observed by Lieb \cite{lieb}. Here, a similar phenomenon holds:  $$ \min_{f:V \rightarrow \mathbb{R} \atop f |_{\partial G} = 0 } \frac{\sum_{(u,v) \in E} (f(u) - f(v))^2}{ \sum_{v \in V} f(v)^2}$$
can only be small if the function $f$ has a large `interior' that is `far away' from the boundary $\partial G$. A version of this was already observed (with $\partial G$ replaced by a neighborhood of the nodal set) in the context of diffusion maps \cite{xiu}.

  \section{Proof of Theorem 3}
 \subsection{Agmon-Allegretto-Piepenbrink.} 
 The proof of Theorem 3 is based on a simple in the style of the ideas of Agmon \cite{agmon}, Allegretto \cite{all1,all2} and Piepenbrink \cite{piep}. We first give a discrete formulation of a statement that is adapted from a continuous version we found in Davies \cite[Theorem 4.2.1]{davies}. After that, we present a minuscule modification that will suffice for our purposes. We may think of the ideas in \S 6.1 as a particular formulation of the classic notion that superharmonic functions give rise to Hardy weights, an idea that has been exploited many times.

 \begin{lemma} Let $G=(V,E)$ be a finite, connected graph and let $W:V \rightarrow \mathbb{R}$ be an arbitrary function.  If there exists a strictly positive function $\phi:V \rightarrow \mathbb{R}_{+}$ such that
 $$ (D-A)\phi +  W \phi \geq 0 \qquad \mbox{entrywise,}$$
then for all $f:V \rightarrow \mathbb{R}$
 $$  \sum_{(u,v) \in E} (f(u) - f(v))^2 + \sum_{v \in V} W(v) f(v)^2 \geq 0.$$
 \end{lemma}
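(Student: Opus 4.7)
The plan is to use a discrete ground-state substitution (a ``Doob transform''): since $\phi > 0$ everywhere on $V$, every $f:V \to \mathbb{R}$ can be written uniquely as $f = \phi g$ with $g = f/\phi$. The strategy is to rewrite the quadratic form $\sum_{(u,v) \in E}(f(u)-f(v))^2 = \langle f, (D-A) f\rangle$ in a way that splits it into (i) a manifestly non-negative weighted Dirichlet energy of $g$ and (ii) a remainder that, after adding $\sum_v W(v) f(v)^2$, becomes a non-negative multiple of the superharmonicity hypothesis $(D-A)\phi + W\phi \geq 0$.

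The computation proceeds in two steps. First, a direct expansion of $((D-A)f)(v) = \deg(v)f(v) - \sum_{u \sim v} f(u)$ with $f = \phi g$ gives the pointwise identity
\begin{align*}
((D-A)f)(v) = g(v)\,((D-A)\phi)(v) + \sum_{u \sim v} \phi(u)\,(g(v) - g(u)).
\end{align*}
Next, I would multiply by $f(v) = \phi(v)g(v)$, sum over $v \in V$, and symmetrize the resulting double sum over ordered neighbor pairs $(u,v)$ by swapping the roles of the endpoints. The cross terms combine via $g(v)(g(v)-g(u)) + g(u)(g(u)-g(v)) = (g(u)-g(v))^2$, and the factor $2$ from symmetrization cancels the factor $2$ from converting ordered pairs to undirected edges. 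The outcome is the key identity
\begin{align*}
\sum_{(u,v) \in E}(f(u)-f(v))^2 = \sum_{v \in V} \phi(v)\, g(v)^2\, ((D-A)\phi)(v) + \sum_{(u,v) \in E} \phi(u)\phi(v)\,(g(u)-g(v))^2.
\end{align*}

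Adding $\sum_v W(v) f(v)^2 = \sum_v W(v)\phi(v)^2 g(v)^2$ to both sides and grouping the first sum on the right with this potential term gives
\begin{align*}
\sum_{(u,v) \in E}(f(u)-f(v))^2 + \sum_{v \in V} W(v) f(v)^2 = \sum_{v \in V} g(v)^2 \phi(v) \bigl[((D-A)\phi)(v) + W(v)\phi(v)\bigr] + \sum_{(u,v) \in E} \phi(u)\phi(v)\,(g(u)-g(v))^2.
\end{align*}
The bracket is non-negative by hypothesis, the weights $\phi(v)$ and $\phi(u)\phi(v)$ are strictly positive, and the squared differences $(g(u)-g(v))^2$ are non-negative, so the right-hand side is a sum of non-negative terms.

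I do not anticipate any genuine obstacle here: once the substitution $f = \phi g$ is in place, the argument is a standard commutator-style manipulation, and is the discrete counterpart of the classical identity $\int |\nabla(\phi g)|^2 = \int g^2 \phi (-\Delta \phi) + \int \phi^2 |\nabla g|^2$ underlying the continuous Agmon--Allegretto--Piepenbrink theory. The only delicate bookkeeping is making sure the ordered-versus-unordered edge sums and the resulting factors of $2$ line up correctly, and that the sign conventions in $D-A$ (positive semi-definite) match the direction of the superharmonicity hypothesis so that the conclusion is indeed $\geq 0$.
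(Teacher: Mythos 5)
Your proposal is correct and follows essentially the same route as the paper: both use the ground-state substitution $f = \phi g$, expand $(D-A)(\phi g)$ via a discrete product rule, symmetrize the cross term into $\sum_{(u,v)\in E}\phi(u)\phi(v)(g(u)-g(v))^2 \geq 0$, and absorb the remaining diagonal term into the potential using the hypothesis $(D-A)\phi + W\phi \geq 0$. The only cosmetic difference is that the paper routes the bookkeeping through an auxiliary function $X(v) = \phi(v)^{-1}[(A-D)\phi](v)$ rather than grouping terms directly as you do.
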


 \begin{proof} The proof is a discrete version of the proof of \cite[Theorem 4.2.1]{davies}. There are some steps in the continuous proof that are structurally different, an application of the product formula for derivatives and a certain local argument that becomes non-local here, however, this is compensated by particularly friendly algebra that leads to nice cancellations.
 We are interested in the positivity of the quadratic form
 $$Q(f) = \sum_{(u,v) \in E} (f(u) - f(v))^2 + \sum_{v \in V} W(v) f(v)^2.$$
 Suppose there exists a strictly positive $\phi > 0$ so that
 $$ (D-A)\phi + W\phi \geq 0 \qquad \mbox{entrywise}.$$
 Then we can define the function $X:V \rightarrow \mathbb{R}$
 $$ X(v) = \frac{1}{\phi(v)} [(A-D)\phi](v).$$
This function satisfies $X(v) \leq W(v).$ We make the ansatz $g = f/\phi$ and compute the quadratic form $Q(f)$. It can be written as
 \begin{align*}
  Q(f) &=  \sum_{(u,v) \in E} (f(u) - f(v))^2 + \sum_{v \in V} W(v) f(v)^2 \\
  &=  \left\langle (D-A) g \phi, g \phi \right\rangle + \sum_{v \in V} W(v) \phi(v)^2 g(v)^2.
 \end{align*}
Emulating an application of the product rule for derivatives, we write
\begin{align*} \left[(D-A) g \phi \right](v)  &= \sum_{(v,w) \in E} ( g(v) \phi(v) - g(w) \phi(w)) \\
&=  \sum_{(v,w) \in E} ( g(v) \phi(v) - g(v) \phi(w) + g(v) \phi(w) -  g(w) \phi(w)) \\
&= g(v) \left[(D-A)\phi\right](v) + \sum_{(v,w) \in E} (g(v) - g(w)) \phi(w),
\end{align*}
This explicit equation now allows us to split the inner product as
\begin{align*}
 \left\langle (D-A) g \phi, g \phi \right\rangle &= \sum_{v \in V} g(v) \phi(v) \left[(D-A) g \phi \right](v) \\
 &=\sum_{v \in V} g(v) \phi(v)g(v) \left[(D-A)\phi\right](v)  \\
 &+  \sum_{v \in V} g(v) \phi(v) \sum_{(v,w) \in E} (g(v) - g(w)) \phi(w)
\end{align*}
The first sum is quite benign and can be written as
$$  \sum_{v \in V} g(v)^2 \phi(v) [(D-A)\phi](v) = - \sum_{v \in V} X(v) f(v)^2.$$
In particular, the first sum can be added to the potential term and 
$$ - \sum_{v \in V} X(v) f(v)^2 +  \sum_{v \in V} W(v) f(v)^2 =  \sum_{v \in V} ( \underbrace{W(v) - X(v)}_{\geq 0}) f(v)^2.$$
The remaining sum $\Sigma_2$ can be written as
\begin{align*}
\sum_{v \in V} g(v) \phi(v) \sum_{(v,w) \in E} (g(v) - g(w)) \phi(w) &= \sum_{v \in V}  \sum_{(v,w) \in E} g(v) \phi(v) (g(v) - g(w)) \phi(w).
\end{align*}
This summation runs over all vertices and then all adjacent edges. In particular, we sum over each edge twice (`once from each side'). This suggests changing the order of summation and instead to sum over the edges. This leads to
\begin{align*}
\Sigma_2 &= \sum_{(v,w) \in E} g(v) \phi(v) (g(v) - g(w)) \phi(w) + g(w) \phi(w) (g(w) - g(v)) \phi(v) \\
&=  \sum_{(v,w) \in E}  \phi(v) \phi(w) \left[ g(v)  (g(v) - g(w)) + g(w) (g(w) - g(v)) \right] \\
&= \sum_{(v,w) \in E} \phi(v) \phi(w) (g(v) - g(w))^2 \geq 0.
\end{align*}
Altogether, we see that $Q(f) \geq 0$ as desired.
\end{proof}

\subsection{A modification.}
Lemma 2 can be thought of as an implementation of the classic approach.  We will need a small modification where we allow the function $\phi$ to not satisfy the inequality in certain places; this does not cause problems as long as the function $f$ vanishes in those places. 

 \begin{lemma} Let $G=(V,E)$ be a finite, connected graph and let $X \subset V$ be an arbitrary subset of vertices. Suppose $W: V \rightarrow V$ is an arbitrary function and that there exists a function $\phi: V \rightarrow \mathbb{R}$ that is strictly positive in $V \setminus X$ such that
 $$ \left[(D-A) \phi \right](v) + W(v) \phi(v) \geq 0 \qquad \mbox{for all}~v \in V \setminus X.$$
Then for all $f:V \rightarrow \mathbb{R}$ that vanish on $X$, meaning $f\big|_{X} = 0$, we have
 $$  \sum_{(u,v) \in E} (f(u) - f(v))^2 + \sum_{v \in V} W(v) f(v)^2 \geq 0.$$
 \end{lemma}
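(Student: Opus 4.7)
The plan is to re-run the proof of Lemma 2 essentially verbatim, using the assumption $f\big|_X=0$ precisely to neutralize the places in that argument where the hypotheses on $\phi$ and $W$ have been weakened. First I would extend $g:=f/\phi$ from $V\setminus X$ (where the quotient is well-defined by strict positivity of $\phi$) to all of $V$ by declaring $g(v):=0$ for $v\in X$. Because $f(v)=0$ on $X$, the identity $f(v)=g(v)\phi(v)$ then holds on all of $V$ regardless of the value of $\phi$ on $X$, and the quadratic form rewrites as
$$ Q(f) \;=\; \langle (D-A)(g\phi),\, g\phi\rangle + \sum_{v\in V} W(v)\phi(v)^2 g(v)^2, $$
exactly as in Lemma 2.

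Next I would invoke the purely algebraic discrete product-rule identity
$$ [(D-A)(g\phi)](v) \;=\; g(v)[(D-A)\phi](v) + \sum_{(v,w)\in E}(g(v)-g(w))\phi(w), $$
which uses no positivity whatsoever, pair it against $g\phi$, and split the inner product into a diagonal piece and an edge piece. The diagonal piece combines with the $W$-term to produce
$$ \sum_{v\in V}\bigl(W(v)\phi(v)^2 g(v)^2 \,-\, X(v) f(v)^2\bigr), $$
where $X(v)$ abbreviates $[(A-D)\phi](v)/\phi(v)$ on $V\setminus X$. The key observation is that every summand indexed by $v\in X$ vanishes automatically since both $f(v)$ and $g(v)$ are $0$ there, so the quantity $X(v)$ never has to be defined on $X$; and for $v\in V\setminus X$ the hypothesis $(D-A)\phi(v)+W(v)\phi(v)\geq 0$ translates precisely into $W(v)-X(v)\geq 0$, making the entire diagonal piece non-negative.

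The edge piece, after the same change-of-summation-order trick that appears in the proof of Lemma 2, simplifies to
$$ \sum_{(v,w)\in E}\phi(v)\phi(w)(g(v)-g(w))^2. $$
For edges with both endpoints in $V\setminus X$ strict positivity of $\phi$ makes each summand non-negative; for an edge $(v,w)$ with $v\in X$ one has $g(v)=0$, leaving the summand $\phi(v)\phi(w)g(w)^2$, which is non-negative under the natural reading that $\phi\geq 0$ on $X$ as well (and in particular in the intended application of the next section, where $\phi$ is the expected hitting time of $\partial G$ and $\phi\big|_X=0$); edges with both endpoints in $X$ contribute $0$ since $g$ vanishes at both ends. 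Summing the pieces gives $Q(f)\geq 0$.

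The main obstacle is really only bookkeeping: one has to verify that the product-rule identity and the reordering of the double sum are both genuinely algebraic and therefore survive the loss of positivity of $\phi$ on $X$, and one has to be cautious with the edge sum, which is the single place where sign information on $\phi$ is actually consumed. Once these two points are checked, the modification demanded by Lemma 3 beyond Lemma 2 is cosmetic.
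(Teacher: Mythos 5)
Your proof follows the same route as the paper's (the discrete Agmon--Allegretto--Piepenbrink ground-state substitution $g=f/\phi$, the discrete product rule, the diagonal/edge split), and your bookkeeping is correct. You have also put your finger on the one point where the paper's own proof is actually imprecise: after handling the diagonal piece, the paper writes ``the rest of the argument is as before,'' but the edge piece
$$ \sum_{(v,w)\in E}\phi(v)\phi(w)\bigl(g(v)-g(w)\bigr)^2 $$
is \emph{not} automatically nonnegative once $\phi$ may take negative values on $X$. For an edge with $v\in X$ and $w\in V\setminus X$, the summand is $\phi(v)\phi(w)g(w)^2$, which has the sign of $\phi(v)$. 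The extra condition $\phi\ge 0$ on $X$ that you invoke is in fact \emph{needed}, not cosmetic: on the path $1$--$2$--$3$ with $X=\{1,3\}$, $\phi=(-1,1,-1)$ and $W(2)=-3$, one has $[(D-A)\phi](2)+W(2)\phi(2)=4-3=1\ge 0$, while for $f=(0,1,0)$ the quadratic form equals $2+W(2)=-1<0$. So the lemma as printed requires a sign hypothesis on $\phi\big|_X$ (say $\phi\ge 0$ there, or at least on the vertices of $X$ adjacent to $V\setminus X$). In the actual application (Theorem 3) this is harmless because $\phi$ is the expected hitting time of $\partial G$ and vanishes on $X=\partial G$, which kills every edge term touching $X$. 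Your write-up is therefore at least as careful as the paper's; the only improvement would be to promote ``the natural reading'' to an explicit stated hypothesis rather than a parenthetical.
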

 \begin{proof} The proof proceeds along the same lines although various steps have to be undertaken with greater care. We note the changes while referring to the argument above for things that stay the same. The function $\phi:V \rightarrow \mathbb{R}$ is positive in $V \setminus X$, this means that
  $$ X(v) = \frac{1}{\phi(v)} [(A-D)\phi](v) \qquad \mbox{is defined in}~V \setminus X$$
 and satisfies $X(v) \leq W(v)$ there. If $f:V \rightarrow \mathbb{R}$ is a function vanishing in $X$, then we can define a function $g:V \rightarrow \mathbb{R}$ by setting 
 $$ g(v) = \begin{cases} f(v)/\phi(v) \qquad &\mbox{if}~v \in V \setminus X \\ 0 \qquad &\mbox{if}~v \in X. \end{cases}$$
Since $f$ vanishes in $X$, we have
 $ \sum_{v \in V} W(v) f(v)^2 =  \sum_{v \in V \setminus X} W(v) f(v)^2.$
The quadratic form $Q(f)$ can be written as
 \begin{align*}
  Q(f)  = \left\langle (D-A) g \phi, g \phi \right\rangle + \sum_{v \in V} W(v) \phi(v)^2 g(v)^2.
 \end{align*}
 Just as above, we can expand the inner product and get
 \begin{align*}
 \left\langle (D-A) g \phi, g \phi \right\rangle  &=\sum_{v \in V} g(v) \phi(v)g(v) \left[(D-A)\phi\right](v)  \\
 &+  \sum_{v \in V} g(v) \phi(v) \sum_{(v,w) \in E} (g(v) - g(w)) \phi(w)
\end{align*}
 We start with the first sum.  Since $g \equiv 0$ in $X$, we have
 $$  \sum_{v \in V} g(v)^2 \phi(v) [(D-A)\phi](v) =  \sum_{v \in V \setminus X} g(v)^2 \phi(v) [(D-A)\phi](v).$$
 For $v \in V \setminus X$ we have  
 $ \phi(v) [(D-A)\phi](v) = \phi(v)^2 (-X(v)) \geq  \phi(v)^2 (-W(v)).$
 This leads to the first sum cancelling with the sum containing the potential term, the rest of the argument is as before.
  \end{proof}

\subsection{Proof of Theorem 3}
 \begin{proof} We use Lemma 3. Let $X = \partial G$ and let
Let $\phi:V \rightarrow \mathbb{R}$ be the expected number of steps a random walk needs to hit the boundary $\partial G$. Note that $\phi \equiv 0$ in $\partial G$. We have to compute the Laplacian
$$  \left[(D-A) \phi \right](v) =  \deg(v) \phi(v) - \sum_{(v,w) \in E} \phi(w).$$
However, we can also think about the function $\phi$ dynamically: the expected hitting time in $v$ consists in taking one random step and then the average hitting time of all the neighbors, therefore
$$ \phi(v) = 1 + \frac{1}{\deg(v)} \sum_{(v,w) \in E} \phi(w)$$
 and we deduce $ \left[(D-A) \phi \right](v) = \deg(v)$. This means that with
 $$ W(v) =  - \frac{\deg(v)}{\phi(v)}$$
 we have
 $$  \left[(D-A) \phi \right](v) + W(v) \phi(v) \geq 0 \qquad \mbox{for all}~v \in V \setminus \partial G$$
 and Lemma 3 implies the result.
 \end{proof}

\section{Proof of Theorem 4 and Theorem 5}
\subsection{An abstract ABP estimate.}
We start with an abstract ABP estimate.

\begin{lemma}
Let $G=(V,E)$ be a finite, connected graph, let $X \subset V$ be a subset of the vertices and let $L$ denote the Kirchhoff Graph Laplacian. Then, there exists a constant $C(X) < \infty$ (depending on $G=(V,E)$ and $X$) such that for all $f: V \rightarrow \mathbb{R}$
$$ \max_{v \in V} f(v) \leq  \max_{w \in  X} f(w) + C(X) \| Lf \|_{L^{\infty}(V \setminus X)}.$$
\end{lemma}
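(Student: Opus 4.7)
The plan is to decompose any $f:V\to\mathbb{R}$ into a discrete harmonic extension plus a ``Dirichlet correction'', and then bound the two pieces separately using the maximum principle and linear algebra respectively. Let $L_X$ denote the principal submatrix of $L$ obtained by deleting rows and columns indexed by $X$; I am implicitly assuming $X\neq\emptyset$, which is the only case where the right-hand side of the inequality is finite.

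The first step is to establish that $L_X$ is invertible. For any $h:V\setminus X\to\mathbb{R}$ extended by zero to all of $V$, one has $\langle L_X h, h\rangle = \sum_{(u,v)\in E}(h(u)-h(v))^2$, and if this vanishes then $h$ is constant on each connected component of $G$, hence identically zero since $G$ is connected and $h\equiv 0$ on the nonempty set $X$. So $L_X$ is positive definite. The second step is to decompose $f=g+h$, where $g:V\to\mathbb{R}$ is the unique function satisfying $g\big|_X=f\big|_X$ and $(Lg)(v)=0$ for all $v\in V\setminus X$ (existence and uniqueness coming from the invertibility of $L_X$), and $h=f-g$. By construction $h$ vanishes on $X$ and satisfies $(Lh)(v)=(Lf)(v)$ on $V\setminus X$, so
\[
h\big|_{V\setminus X}=L_X^{-1}\bigl(Lf\big|_{V\setminus X}\bigr).
\]
The standard discrete maximum principle applied to the harmonic function $g$ yields $\max_{v\in V} g(v) = \max_{w\in X} g(w) = \max_{w\in X} f(w)$, while the display above gives $\|h\|_\infty \leq C(X)\,\|Lf\|_{L^\infty(V\setminus X)}$ with $C(X)=\|L_X^{-1}\|_{\ell^\infty\to\ell^\infty}$ (equivalently, the maximum $\ell^1$-norm among the rows of $L_X^{-1}$). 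Adding the two bounds completes the proof.

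There is no serious obstacle: the whole argument reduces to invertibility of $L_X$ and the maximum principle. The only real subtlety is that the constant $C(X)$ produced this way is opaque — it is essentially the maximum row-sum of the Green's function for the Dirichlet problem on $V\setminus X$. For Lemma 4 as stated this is harmless (any finite value suffices), but the point of Theorem 4 will be to show that when $X=\partial G$ this Green-function quantity admits the explicit bound $\lesssim \mathrm{diam}(G)^2$ with the correct degree dependence, which in turn is closely related to the expected hitting time $\phi$ that already appeared in Theorem 3.
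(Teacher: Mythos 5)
Your proof is correct, but it takes a genuinely different route than the paper. The paper gives a soft compactness argument: it normalizes $f$ so that $f\big|_X \equiv 0$ and $\max_V f = 1$, then supposes no finite $C(X)$ exists, extracts a convergent subsequence $f_{n_k}\to f^*$ from the (compact) unit sphere with $\|Lf^*\|_{L^\infty(V\setminus X)}=0$, and derives a contradiction from the maximum principle for the harmonic limit $f^*$ on the connected graph. Your argument is constructive: you prove invertibility of the Dirichlet submatrix $L_X$ via the quadratic-form identity $\langle L_X h,h\rangle = \sum_{(u,v)\in E}(h(u)-h(v))^2$ (using the crucial fact that $(L_X h)(v)=(Lh)(v)$ when $h\big|_X=0$), decompose $f=g+h$ into the harmonic extension of $f\big|_X$ plus the Dirichlet solve $h\big|_{V\setminus X}=L_X^{-1}(Lf\big|_{V\setminus X})$, and then invoke the maximum principle only for the piece $g$. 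The advantage of your approach is that it produces the explicit constant $C(X)=\|L_X^{-1}\|_{\ell^\infty\to\ell^\infty}$, the maximum row sum of the discrete Green's function for the Dirichlet problem on $V\setminus X$; as you note, this is precisely the object that Theorem 4 then bounds by $\lesssim\mathrm{diam}(G)^2$ when $X=\partial G$, and it is closely linked to the expected hitting times $\phi$ used in Theorem 3. The paper's proof is shorter and avoids introducing $L_X$ explicitly, which is all that the abstract lemma requires. You are also right to flag the implicit assumption $X\neq\emptyset$; the paper's proof would equally fail for $X=\emptyset$ (the normalization $f\big|_X\equiv 0$ becomes vacuous and the limiting contradiction disappears, as indeed it must since constants make the inequality false).
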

\begin{proof}
The Laplacian $L$ annihilates constant functions, we can thus replace 
$$f \rightarrow f - \max_{w \in X} f(w)$$
without changing the inequality. This rescaling ensures that the function satisfies $f \leq 0$ on $X$. The quantity $\| Lf \|_{L^{\infty}(V \setminus X)}$ can only decrease if we replace all negative values of $f$ in $X$ by $0$. We may thus assume that $f \equiv 0$ in $X$ and the inequality that remains to be shown is
 $$ \max_{v \in V} f(v) \leq C(X) \| Lf \|_{L^{\infty}(V \setminus X)}.$$
We may then replace $f \rightarrow \lambda f$ for any $\lambda >0$ without changing the inequality and may thus assume that $\max_{v \in V} f(v) = 1$. If there as no such constant, then we can take a sequence of functions $f_k:V \rightarrow \mathbb{R}$ that vanish on $X$, attain maximal value 1 on $V \setminus X$ and satisfy
$$ 1 \geq k \cdot \| Lf_k \|_{L^{\infty}(V \setminus X)}.$$
Since the graph is finite, the sequence $f_k$ has a convergent subsequence so that $f_{n_k} \rightarrow f^*$ and $\| Lf^* \|_{L^{\infty}(V \setminus X)} = 0$. This shows that in every vertex $v$ in which $f^*$ attains a maximum, all the adjacent vertices also have to attain a maximum and this contradicts the fact that $G=(V,E)$ is connected.
\end{proof}

\begin{center}
\begin{figure}[h!]
\begin{tikzpicture}
\filldraw (0,0) circle (0.06cm);
\filldraw (1,1) circle (0.06cm);
\filldraw (1,-1) circle (0.06cm);
\filldraw (2,1.5) circle (0.06cm);
\filldraw (2,0.5) circle (0.06cm);
\filldraw (2,-1.5) circle (0.06cm);
\filldraw (2,-0.5) circle (0.06cm);
\filldraw (3,1.75) circle (0.06cm);
\filldraw (3,1.25) circle (0.06cm);
\filldraw (3,0.75) circle (0.06cm);
\filldraw (3,0.25) circle (0.06cm);
\filldraw (3,-1.75) circle (0.06cm);
\filldraw (3,-1.25) circle (0.06cm);
\filldraw (3,-0.75) circle (0.06cm);
\filldraw (3,-0.25) circle (0.06cm);
\draw [thick] (0,0) -- (1,1) --(2, 1.5) -- (3, 1.75);
\draw [thick] (0,0) -- (1,-1) --(2, -1.5) -- (3, -1.75);
\draw [thick]   (1,1) --(2, 0.5) -- (3, 0.75);
\draw [thick]   (2, 0.5) -- (3, 0.25);
\draw [thick]   (1,-1) --(2,-0.5) -- (3, -0.75);
\draw [thick]   (2, -0.5) -- (3, -0.25);
\draw [thick]  (2, 1.5) -- (3, 1.25);
\draw [thick]  (2, -1.5) -- (3, -1.25);
\draw [thick] (0,0) circle (0.3cm);
\node at (-0.4, -0.4) {$X$};
\draw [dashed] (3,1.75) -- (4, 1.8);
\draw [dashed] (3,1.75) -- (4, 1.7);
\draw [dashed] (3,1.25) -- (4, 1.3);
\draw [dashed] (3,1.25) -- (4, 1.2);
\draw [dashed] (3,0.75) -- (4, 0.8);
\draw [dashed] (3,0.75) -- (4, 0.7);
 \draw [dashed] (3,0.25) -- (4, 0.3);
\draw [dashed] (3,0.25) -- (4, 0.2);
 \draw [dashed] (3,-1.75) -- (4, -1.8);
\draw [dashed] (3,-1.75) -- (4, -1.7);
\draw [dashed] (3,-1.25) -- (4, -1.3);
\draw [dashed] (3,-1.25) -- (4, -1.2);
\draw [dashed] (3,-0.75) -- (4, -0.8);
\draw [dashed] (3,-0.75) -- (4, -0.7);
 \draw [dashed] (3,-0.25) -- (4,-0.3);
\draw [dashed] (3,-0.25) -- (4, -0.2);
\draw [thick, <->] (0, -2.5) -- (6, -2.5);
\node at (3, -2.8) {$\ell$ \small levels};
\node at (0, -2.2) {\footnotesize $f = 0$};
\node at (1, -2.2) {\footnotesize $f = 1$};
\node at (2, -2.2) {\footnotesize $f = \frac{3}{2}$};
\node at (3, -2.2) {\footnotesize $f = \frac{7}{4}$};
\node at (6, -2.2) {\footnotesize $f = \frac{2^{\ell} - 1}{2^{\ell-1}}$};
\foreach \x in {0,...,40}{
 \filldraw (6,0.09*\x - 1.8) circle (0.06cm);
 \draw [dashed] (6,0.09*\x - 1.8) -- (5,0.09*\x - 1.8);
 \node at (4.5,0) {$\dots$};
}
\end{tikzpicture}
\caption{An example where $C(X)$ is large: $f$, on a binary tree, is harmonic except for the root and the leaves.}
\end{figure}
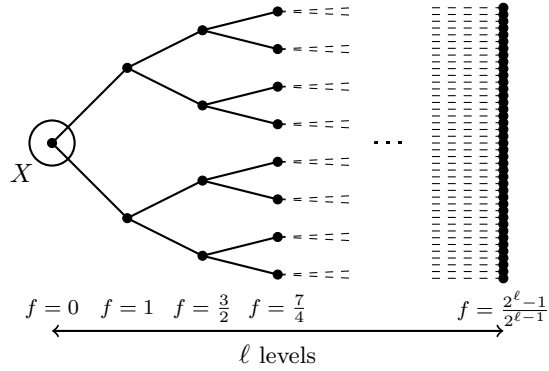
\end{center}

While $C(X) < \infty$ is trivial, $C(X)$ may depend on $G=(V,E)$ and $X$ in a complicated way and it may be exponentially large in various graph parameters. We quickly construct an example for which $C(X)$ is large. The example, shown in Fig. 6, is a binary tree of depth $\ell$. We choose the set $X$ to
be the root of the tree and consider a concrete function $f:V \rightarrow \mathbb{R}$ which vanishes
in the root and only depends on the depth of the vertex. The values converge to $2$ at an exponential rate
in the depth of the level and are chosen so that the function $f$ is actually harmonic except in $X$ and the
very last column (the vertices with degree 1). In the very last column, however, the Laplacian is exponentially small in $\ell$. The arising Alexandrov-Bakelman-Pucci estimate
then shows that
$$ \underbrace{\max_{v \in V} f(v)}_{\sim 2} \leq  \underbrace{\max_{w \in  X} f(w)}_{=0} + C(X) \underbrace{\| Lf \|_{L^{\infty}(V \setminus X)}}_{\sim 2^{-\ell}}.$$
This example implies that the constant $C(X)$ has to satisfy $C(X) \gtrsim 2^{\ell}$. 
 The example is perfectly valid: if $X$ is chosen to be this particular vertex, then one cannot hope for a much better estimate. Our interpretation of the example is the other way around: if, as achieved by Theorem 4, we obtain nice uniform 
 Alexandrov-Bakelman-Pucci estimates, this means that the set $X = \partial G$ has to nicely adapt to the geometry in question to avoid such pathological behavior. Indeed, the boundary $\partial G$ of a tree are simply the leaves. In that case, the root of the tree is part of the interior which, for this particular example, forces us to consider the Laplacian also in the root and we end up with $\| Lf \|_{L^{\infty}(V \setminus \partial G)} \sim 1$ which means that Theorem 4 is trivially satisfied.

\subsection{Proof of Theorem 4} \label{abp:ex}
\begin{proof}
We first rewrite the equation $L f = g$ as $ (D-A)f = g$ or
$$ \deg(v) f(v) - \sum_{(v,w) \in E} f(w) = g(v).$$
This can be rewritten as
$$ f(v) = \frac{g(v)}{\deg(v)} + \frac{1}{\deg(v)} \sum_{(v,w) \in E} f(w)$$
implying, for $v \in V \setminus \partial G$,
\begin{align*}
f(v) &= \frac{g(v)}{\deg(v)} + \frac{1}{\deg(v)} \sum_{(v,w) \in E} f(w)\\
&\leq   \frac{\max\left\{ g(v), 0 \right\}}{\min_{v \in V} \deg(v)} + \frac{1}{\deg(v)} \sum_{(v,w) \in E} f(w) \\
&\leq \frac{\|\max\left\{ g(v), 0 \right\}\|_{L^{\infty}(V\setminus \partial G)}}{\min_{v \in V} \deg(v)} + \frac{1}{\deg(v)} \sum_{(v,w) \in E} f(w)
\end{align*}
If we define a random walk on $V$ in the usual way where $v_{k+1}$ is a randomly chosen neighbor of $v_k \in V \setminus \partial G$, then this inequality ensures that
$$ f(v_k) \leq  \frac{\|\max\left\{ g(v), 0 \right\}\|_{L^{\infty}(V\setminus \partial G)}}{\min_{v \in V} \deg(v)}  + \mathbb{E} f(v_{k+1}).$$
Just as in the proof of Theorem 2, we define $v_{k+1}$ to be $v_k$ whenever $v_k \in \partial G$. In that case, we have the trivial inequality $\mathbb{E} f(v_{k+1}) = f(v_k)$. Starting the random walk in the vertex $v_0 \in V \setminus \partial G$ that maximizes $f$, we deduce that
$$ f(v_0) \leq k \frac{\|\max\left\{ g(v), 0 \right\}\|_{L^{\infty}(V \setminus \partial G) }}{\min_{v \in V} \deg(v)}  + \mathbb{E} f(v_k).$$
Combining Theorem 1 with Markov's inequality shows that after 
$$k= 2\max_{v \in V} \deg(v) \cdot \mbox{diam}(G)^2$$ steps, at least $1/2$ of all the particles are already on the boundary. Choosing that value for $k$, we see that the worst case is when all the random walkers hit points on the boundary that are maximal while the remaining random walkers congregate in the maximal value inside the domain, that gives
$$ \mathbb{E} f(v_k) \leq \frac{1}{2} \max_{v \in \partial G} f(v) + \frac{f(v_0)}{2}.$$
Rearranging gives
$$ f(v_0) \leq \max_{v \in \partial G} f(v) + 2\max_{v \in V}\deg(v)  \mbox{diam}(G)^2 \frac{\|\max\left\{ g(v), 0 \right\}\|_{L^{\infty}(V \setminus \partial G) }}{\min_{v \in V} \deg(v)}$$
which is the desired result.\end{proof}

 \subsection{Proof of Theorem 5}
  \begin{proof} The argument is virtually identical to the proof of Theorem 4, if not even a bit simpler. The equation $Lf = \lambda_2 f$ implies that for every  $v \in V$
 \begin{align*}
 f(v) & = \frac{\lambda_2}{\deg(v)} f(v) + \frac{1}{\deg(v)} \sum_{(v,w) \in E} f(w).
  \end{align*}
  Considering again $v_0 \in V \setminus \partial G$ so that $f$ is maximal and an associated random walk on $V$ where $v_{k+1}$ is a randomly chosen neighbor if $v_k \in V \setminus \partial G$ and $v_{k+1}$ if $v_k$ is in the boundary, we have
  $$ \left(1 - \frac{\lambda_2}{\min_{v \in V} \deg(v)}\right) f(v_{k}) \leq f(v_{k+1})$$
and thus
  $$ \left(1 - \frac{\lambda_2}{\min_{v \in V} \deg(v)}\right)^k f(v_0) \leq \mathbb{E} f(v_k).$$
  Choosing, as above, $k = 2\max_{v \in V} \mbox{diam}(G)^2$ ensures that half the random walks have hit the boundary and
    $$ \left(1 - \frac{\lambda_2}{\min_{v \in V} \deg(v)}\right)^k f(v_0) \leq \frac{1}{2} \max_{w \in \partial G}f(w) + \frac{f(v_0)}{2}$$
 and thus
 $$ \max_{v \in V \setminus \partial G} f(v) \leq   \left(1 - \frac{\lambda_2}{\min_{v \in V} \deg(v)}\right)^{- 2\max_{v \in V} \mbox{\tiny diam}(G)^2}  \max_{w \in \partial G}f(w).$$
 \end{proof}

\section{Proof of Theorem 6}
\begin{proof} Let $\mu$ be a probability measure on the vertices $V$ of the connected graph $G=(V,E)$ that solves
$$ \sum_{v,w \in V} f(d(v,w)) \mu(v) \mu(w) \rightarrow \max.$$
The existence of such a measure is clear from compactness. We will now argue that if $a \in V \setminus \partial G$ is
a vertex that is not in the boundary, then $\mu(a) = 0$.  The proof is by contradiction, suppose $\mu(a) > 0$. We now construct
a new measure $\nu$ by setting
$$ \nu(v) = \begin{cases} \mu(v) \qquad &\mbox{if}~v \neq a ~\mbox{and} ~(a,v) \notin E \\
\mu(v) + \frac{\mu(a)}{\deg(a)} \qquad &\mbox{if}~(a,v) \in E \\
0 \qquad &\mbox{if}~v = a. 
\end{cases}$$
We will show that the measure $\nu$ has a larger energy. Abbreviating
$$ X =  \sum_{v,w \in V} f(d(v,w)) \left( \nu(v) \nu(w) -  \mu(v) \mu(w) \right)$$
we note that nothing happens unless at least one of the vertices is either $a$ or a neighbor of $a$. We use $N(a)$ to denote the neighbors of $a$ and $B(a) = N(a) \cup \left\{a\right\}$ to denote the `ball' containing $a$ and its neighbors. Then, omitting the summand for brevity, we have $\sum_{v,w \in V \setminus B(a)} = 0$, the remaining sums are
\begin{align*}
X &=  \sum_{v \in V \setminus B(a), w \in N(a)} +    \sum_{v \in V \setminus B(a), w = a} \\
&+  \sum_{v \in N(a), w \in V \setminus B(a)} + \sum_{v \in N(a), w \in N(a)} + \sum_{v \in N(a), w = a} \\
&+ \sum_{v =a, w \in V \setminus B(a)} + \sum_{v =a, w \in N(a)} + \sum_{v = a, w = a}.
\end{align*}
We refer to these summands as $\Sigma_1, \dots, \Sigma_8$. Then
\begin{align*}
\Sigma_1 &= \sum_{v \in V \setminus B(a), w \in N(a)} f(d(v,w)) \mu(v) \frac{\mu(a)}{\deg(a)} \\
\Sigma_2 &= \sum_{v \in V \setminus B(a)} f(d(v,a)) (-\mu(v) \mu(a)).
\end{align*}
 We will now argue that the sum of these two terms is positive. First,
\begin{align*}
\Sigma_1 + \Sigma_2 &= \sum_{v \in V \setminus B(a)}  \mu(v) \left(  \left[ \sum_{w \in N(a)} f(d(v,w)) \frac{\mu(a)}{\deg(a)} \right] - f(d(v,a))   \mu(a)  \right) \\
&=  \sum_{v \in V \setminus B(a)}  \mu(v)   \left[ \sum_{w \in N(a)}  \left[f(d(v,w)) - f(d(v,a))\right]  \frac{\mu(a)}{\deg(a)} \right] \\
&=     \frac{\mu(a)}{\deg(a)} \sum_{v \in V \setminus B(a)}  \mu(v)    \sum_{w \in N(a)}  \left[f(d(v,w)) - f(d(v,a))\right]. 
\end{align*}
At this point, we use that $a \notin \partial G$ to argue that the inner sum is positive for each $v \in V$.  Fix $v \in V$. Then
$d(v,a)$ is a certain non-negative integers $d \in \mathbb{N}_{\geq 0}$. We partition the set $N(a)$, the neighbors of $a$ by their distance from $v$. The triangle inequality ensures that each neighbor of $a$ has either distance $d-1, d$ or $d+1$ from $v$. This leads to a partition into $N(a) = N_{d-1} \cup N_d \cup N_{d+1}$. The set of vertices with the same distance does not contribute anything since $d(v,w) = d = d(v,a)$. The remaining two sets contribute
 \begin{align*}
 \sum_{w \in N(a)} f(d(v,w)) - f(d(v,a)) &= (f(d+1) - f(d)) \#N_{d+1} \\
 &+  \left(f(d-1) - f(d)\right) \#N_{d-1}.
 \end{align*}
Since $a \notin \partial G$, we have $\#N_{d+1}  \geq \#N_{d-1}$. Then the expression can be rewritten as
 \begin{align*}
 \sum_{w \in N(a)} f(d(v,w)) - f(d(v,a)) &= (f(d+1) - 2f(d) + f(d-1)) \#N_{d-1} \\
 &+  ( \#N_{d+1} -  \#N_{d-1}) \left(f(d+1) - f(d)\right).
 \end{align*}
We note that $f(d+1) - 2f(d) + f(d-1) > 0$ because of the strict convexity of $f$ and $f(d+1) - f(d) \geq 0$ because $f$ is monotonically increasing. This shows that $\Sigma_1 + \Sigma_2 \geq 0$.  By symmetry, the very same argument with $v,w$ interchanged
 also shows $\Sigma_3 + \Sigma_6 \geq 0$.  The sum $\Sigma_4$ simplifies since
 \begin{align*}
 \Sigma_4 &= \sum_{v,w \in N(a)} f(d(v,w))  \left( \nu(v) \nu(w) -  \mu(v) \mu(w) \right) \\
 &= \sum_{v,w \in N(a)} f(d(v,w))  \left(  \left( \mu(v) + \frac{\mu(a)}{\deg(a)}\right) \left( \mu(w) + \frac{\mu(a)}{\deg(a)} \right) -  \mu(v) \mu(w) \right) \\
 &=  \sum_{v,w \in N(a)} f(d(v,w)) (\mu(v) + \mu(w))  \frac{\mu(a)}{\deg(a)} + \sum_{v,w \in N(a)} f(d(v,w)) \frac{\mu(a)^2}{\deg(a)^2}.
 \end{align*}
Similarly,
  \begin{align*}
 \Sigma_5 &= \sum_{v \in N(a)} f(d(v,a))  \left( \nu(v) \nu(a) -  \mu(v) \mu(a) \right) =\sum_{v \in N(a)} f(d(a,v)) (- \mu(a) \mu(v)) \\
 &= - \sum_{v, w \in N(a)} f(d(a,v))  \frac{\mu(a)}{\deg(a)} \mu(v) = - \sum_{v, w \in N(a)} f(1)  \frac{\mu(a)}{\deg(a)} \mu(v) 
 \end{align*}
 and, by symmetry,  $\Sigma_7 = \Sigma_5$.
Therefore
 \begin{align*}
  \Sigma_4 + \Sigma_5 + \Sigma_7  &=  \sum_{v,w \in N(a)} (f(d(v,w))-f(1)) (\mu(v) + \mu(w))  \frac{\mu(a)}{\deg(a)} \\
  &+ \sum_{v,w \in N(a)} f(d(v,w)) \frac{\mu(a)^2}{\deg(a)^2}. 
 \end{align*}
 The last sum can be used to cancel $\Sigma_8 = - f(0) \mu(a)^2$ since
 $$  \sum_{v,w \in N(a)} f(d(v,w)) \frac{\mu(a)^2}{\deg(a)^2} -   f(0) \mu(a)^2 = \sum_{v,w \in N(a)} (f(d(v,w)) - f(0)) \frac{\mu(a)^2}{\deg(a)^2}$$
 and $f$ is monotonically increasing. It remains to show that
 $$   \sum_{v,w \in N(a)} (f(d(v,w))-f(1)) (\mu(v) + \mu(w))  \geq 0.$$  
 By symmetry, it is enough to show
  $$   0\leq   \sum_{v,w \in N(a)} (f(d(v,w))-f(1)) \mu(v)  = \sum_{v \in N(a)} \mu(v) \sum_{w \in N(a)} (f(d(v,w))-f(1)) $$  
which would be implied if we can show that the inner sum is nonnegative. Here, we use once more that $a \in V \setminus \partial G$.  Fixing a neighbor $v$, this tells us that since $a$ has one neighbor at distance 0 from $v$ (of course $v$ itself), it has to have at least one neighbor at distance 2 and therefore 
$$ \sum_{w \in N(a)} (f(d(v,w))-f(1))  \geq (f(0) - f(1)) + (f(2) - f(1)) \geq 0$$
by convexity of $f$. This concludes the proof.
\end{proof}

\end{document}